\documentclass[a4paper,11pt]{article}%
\usepackage{amssymb}
\usepackage{amsmath}
\usepackage{amsfonts}
\usepackage{graphicx}
\usepackage{boxedminipage}%
\setcounter{MaxMatrixCols}{30}
\providecommand{\U}[1]{\protect\rule{.1in}{.1in}}
\newtheorem{theorem}{Theorem}

\newtheorem{corollary}[theorem]{Corollary}

\newtheorem{example}[theorem]{Example}

\newtheorem{remark}[theorem]{Remark}

\newenvironment{proof}[1][Proof]{\noindent\textbf{#1.} }{\ \rule{0.5em}{0.5em}}
\topmargin -1cm
\textwidth 16.0cm
\textheight 24cm
\parskip 0cm
\parindent 0cm
\oddsidemargin 0cm
\evensidemargin 0cm
\begin{document}

\title{An Alternative Approach to Elliptical Motion}
\author{Mustafa \"{O}zdemir\thanks{Department of Mathematics, Akdeniz University,
Antalya, TURKEY, e-mail: mozdemir@akdeniz.edu.tr}}
\maketitle

\begin{abstract}
Elliptical rotation is the motion of a point on an ellipse through some angle
about a vector. The purpose of this paper is to examine the generation of
elliptical rotations and to interpret the motion of a point on an elipsoid
using elliptic inner product and elliptic vector product. To generate an
elliptical rotation matrix, first we define an elliptical ortogonal matrix and
an elliptical skew symmetric matrix using the associated inner product. Then
we use elliptic versions of the famous Rodrigues, Cayley, and Householder
methods to construct an elliptical rotation matrix. Finally, we define
elliptic quaternions and generate an elliptical rotation matrix using those
quaternions. Each method is proven and is provided with several numerical examples.

\noindent\noindent\textbf{Keywords : }Elliptic Quaternion, Rotation Matrices,
Rodrigues Formula, Cayley Transformation, Householder Transformation.

\textbf{MSC 2010:} 15A63, 15A66, 70B05 , 70B10, 70E17, 53A17.

\end{abstract}

\section{Introduction}

\qquad A rotation is an example of an isometry, a map that moves points
without changing the distances between them. A rotation is a linear
transformation that describes the motion of a rigid body around a fixed point
or an axis and can be expressed with an orthonormal matrix which is called a
rotation matrix. $n\times n$ rotation matrices form a special orthogonal
group, denoted by $\mathbf{SO}(n)$, which, for $n>2,$ is non-abelian. The
group of $n\times n$ rotation matrices is isomorphic to the group of rotations
in an $n$ dimensional space. This means that multiplication of rotation
matrices corresponds to composition of rotations. Rotation matrices are used
extensively for computations in geometry, kinematics, physics, computer
graphics, animations, and optimization problems involving the estimation of
rigid body transformations. For this reason, the generation of a rotation
matrix is considered to be an important problem in mathematics.

\qquad In the two dimensional Euclidean space, a rotation matrix can easily be
generated using basic linear algebra or complex numbers. Similarly, in the
Lorentzian plane, a rotation matrix can be generated by double (hyperbolic)
numbers. In higher dimensional spaces, obtaining a rotation matrix using the
inner product is impractical since each column and row of a rotation matrix
must be a unit vector perpendicular to all other columns and rows,
respectively. These constraints make it difficult to construct a rotation
matrix using the inner product. Instead, in higher dimensional spaces,
rotation matrices can be generated using various other methods such as unit
quaternions, the Rodrigues formula, the Cayley formula, and the Householder
transformation. We will give a brief review of these methods and use
elliptical versions of these methods later in the paper.

\textbf{1. A unit quaternion : }Each unit quaternion represents a rotation in
the Euclidean 3-space. That is, only four numbers are enough to construct a
rotation matrix, the only constraint being that the norm of the quaternion is
equal to 1. Also, in this method, the rotation angle and the rotation axis can
be determined easily. However, this method is only valid in the three
dimensional spaces (\cite{qua2}, \cite{qua3}). In the Lorentzian space,
timelike split quaternions are used instead of ordinary usual quaternions
(\cite{qua4}, \cite{qua5}).

\textbf{2. Rodrigues Formula : }An orthonormal matrix can be obtained using
the matrix exponential $e^{\theta A}$ where $A$ is a skew symmetric matrix and
$\theta$ is the rotation angle. In this method, only three numbers are needed
to construct a rotation matrix in the Euclidean 3-space (\cite{rod1},
\cite{rod2}, \cite{rod3} and, \cite{rod4}).The vector set up with these three
numbers gives the rotation axis. This method can be extended to the $n$
dimensional Euclidean and Lorentzian spaces (\cite{eberly}, \cite{gallier 2},
\cite{Gallier book}, \cite{MEBIUS2} and, \cite{rot3}).

\textbf{3. Cayley Formula : }The formula $C=\left(  I+A\right)  \left(
I-A\right)  ^{-1}$ gives a rotation matrix, where $A$ is a skew symmetric
matrix. Rotation matrices can be given by the Cayley formula without using
trigonometric functions. The Cayley formula is an easy method but it doesn't
give the rotation angle directly (\cite{melek2}, \cite{cay1}, \cite{cay2},
\cite{cay3} and, \cite{cay4}).

\textbf{4. Householder Transformation : }The Householder transformation gives
us a reflection matrix. We can obtain a rotation matrix using two Householder
transformations. This method is an elegant method but it can be long and
tedious. Also, the rotation angle has to be calculated separately. This
transformation can be used in several scalar product spaces (\cite{Aragon1},
\cite{Aragon2}, \cite{Fuller}, \cite{Macykey1} and, \cite{Uhlig}).

\qquad Details about generating rotation matrices, particularly in the
Euclidean and Lorentzian spaces, using these methods can be found in various
papers, some of which are given in the reference section. Those authors mostly
studied the rotation matrices in the positive definite scalar product space
whose associated matrices are diag$\left(  \pm1,\cdots,\pm1\right)  ,$ and
interpreted the results geometrically. For example, quaternions and timelike
split quaternions were used to generate rotation matrices in the three
dimensional Euclidean and Lorentzian spaces where the associated matrices were
$diag\left(  1,1,1\right)  $ and $diag\left(  -1,1,1\right)  ,$ respectively.
In these spaces, rotations occur on the sphere $x^{2}+y^{2}+z^{2}=r^{2}$ or
the hyperboloids $-x^{2}+y^{2}+z^{2}=\pm r^{2}$. That is, Euclidean and
Lorentzian rotation matrices help us to understand spherical and hyperbolic
rotations. In the Euclidean space, a rotation matrix rotates a point or a
rigid body through a circular angle about an axis. That is, the motion happens
on a circle. Similarly, in the Lorentzian space, a rotation matrix rotates a
point through an angle about an axis circularly or hyperbolically depending on
whether the rotation axis is timelike or spacelike, respectively.

\qquad In this paper, we investigate elliptical rotation matrices, which are
orthogonal matrices in the scalar product space, whose associated matrix is
diag$\left(  a_{1},a_{2},a_{3}\right)  $ with $a_{1},a_{2},a_{3}\in%
\mathbb{R}
^{+}$. First, we choose a proper scalar product to the given ellipse (or
ellipsoid) such that this ellipse (or ellipsoid) is equivalent to a circle (or
sphere) for the scalar product space. That is, the scalar product doesn't
change the distance between any point on the ellipse (or ellipsoid) and
origin. Interpreting a motion on an ellipsoid is an important concept since
planets usually have ellipsoidal shapes and elliptical orbits. The geometry of
ellipsoid can be examined using affine transformations, because of an
ellipsoid can be considered as an affine map of the unit sphere. For example,
for the ellipsoid\ $\mathcal{E}^{2}=\left\{  \mathbf{x}\in%
\mathbb{R}
^{3}:\mathbf{x}^{t}A\mathbf{x}\leq1\right\}  $ and the unit sphere
$S^{2}=\left\{  \mathbf{x}\in%
\mathbb{R}
^{3}:\left\Vert \mathbf{x}\right\Vert =\mathbf{x}^{t}\mathbf{x}\leq1\right\}
,$ we can write $\mathcal{E}^{2}=T\left(  S^{2}\right)  $ using the affine
transformation $T(\mathbf{x})=A\mathbf{x}+c$, $x\in\mathcal{E}^{2}.$ Then we
have, $\mathbf{Vol}(\mathcal{E}^{2})=\mathbf{Vol}(T(S^{2}))=\sqrt{\det
Q}\mathbf{Vol}\left(  S^{2}\right)  =\sqrt{\det Q}4\pi/3$ where $Q=AA^{t}.$

\qquad The aim of this study is to explain the motion on the ellipsoid
\[
\dfrac{x^{2}}{a^{2}}+\dfrac{y^{2}}{b^{2}}+\dfrac{z^{2}}{c^{2}}=1,
\]
as a rotation, using the proper inner product, vector product and elliptical
orthogonal matrices. In this method, the elliptical inner product, the vector
product and the angles are compatible with the parameters $\theta$ and $\beta$
of the parametrization $\varphi\left(  \theta,\beta\right)  =\left(
a\cos\theta\cos v,b\cos\theta\sin\beta,c\sin\theta\right)  $. We use the
classical methods to generate elliptical rotation matrices. In the
Preliminaries section, first we explain how to define a suitable scalar
product and a vector product for a given ellipsoid. Then we introduce the
symmetric, skew symmetric and orthogonal matrices in this elliptical scalar
product space. Finally, we examine the motion on an ellipsoid using elliptical
rotation matrices. In section 3, we generate the elliptical rotation matrices
using various classical methods (such as, Cayley formula, Rodrigues formula
and Householder transformation) compatible with the defined scalar product.
Furthermore, we defined the elliptic quaternions and generate elliptical
rotations using unit elliptic quaternions.

\section{Preliminaries}

\qquad We begin with a brief review of scalar products. More informations can
be found in (\cite{Macykey1}, \cite{Aragon1} and, \cite{affine}). Consider the map

\begin{center}
$\mathcal{B}:%
\mathbb{R}
^{n}\times%
\mathbb{R}
^{n}\rightarrow%
\mathbb{R}
,$ \ \ $\left(  \mathbf{u},\mathbf{v}\right)  \rightarrow\mathcal{B}\left(
\mathbf{u},\mathbf{v}\right)  $
\end{center}

for $\mathbf{u,v}\in%
\mathbb{R}
^{n}.$ If such a map is linear in each argument, that is,%
\begin{align*}
\mathcal{B}\left(  a\mathbf{u}+b\mathbf{v},\mathbf{w}\right)   &
=a\mathcal{B}\left(  \mathbf{u},\mathbf{w}\right)  +b\mathcal{B}\left(
\mathbf{v},\mathbf{w}\right)  ,\\
\mathcal{B}\left(  \mathbf{u},c\mathbf{v}+d\mathbf{w}\right)   &
=c\mathcal{B}\left(  \mathbf{u},\mathbf{v}\right)  +d\mathcal{B}\left(
\mathbf{u},\mathbf{w}\right)  ,
\end{align*}
where, $a,b,c,d\in%
\mathbb{R}
$ and $\mathbf{u,v,w}\in%
\mathbb{R}
^{n},$ then it is called a bilinear form. Given a bilinear form on $%
\mathbb{R}
^{n},$ there exists a unique $\Omega\in%
\mathbb{R}
^{n\times n}$ square matrix such that for all $\mathbf{u,v}\in%
\mathbb{R}
^{n},$ $\mathcal{B}\left(  \mathbf{u},\mathbf{v}\right)  =\mathbf{u}^{t}%
\Omega\mathbf{v}.$ $\Omega$ is called "\textit{the matrix associated with the
form}" with respect to the standard basis and we will denote $\mathcal{B}%
\left(  \mathbf{u},\mathbf{v}\right)  $ as $\mathcal{B}_{\Omega}\left(
\mathbf{u},\mathbf{v}\right)  $ as needed. A bilinear form is said to be
symmetric or skew symmetric if $\mathcal{B}\left(  \mathbf{u},\mathbf{v}%
\right)  =\mathcal{B}\left(  \mathbf{u},\mathbf{v}\right)  $ or $\mathcal{B}%
\left(  \mathbf{u},\mathbf{v}\right)  =-\mathcal{B}\left(  \mathbf{u}%
,\mathbf{v}\right)  $, respectively. Hence, the matrix associated with a
symmetric bilinear form is symmetric, and similarly, the associated matrix of
a skew symmetric bilinear form is skew symmetric. Also, a bilinear form is
nondegenerate if its associated matrix is non-singular. That is, for all
$\mathbf{u}\in%
\mathbb{R}
^{n}$, there exists $\mathbf{v}\in%
\mathbb{R}
^{n}$, such that $\mathcal{B}\left(  \mathbf{u},\mathbf{v}\right)  \neq0.$ A
real scalar product is a non-degenerate bilinear form. The space $%
\mathbb{R}
^{n}$ equipped with a fixed scalar product is said to be a real scalar product
space. Also, some scalar products, like the dot product, have positive
definitely property. That is, $\mathcal{B}\left(  \mathbf{u},\mathbf{u}%
\right)  \geq0$ and $\mathcal{B}\left(  \mathbf{u},\mathbf{u}\right)  =0$ if
and only if $\mathbf{u}=0.$ Now, we will define a positive definite scalar
product, which we call the $\mathcal{B}$-inner product or elliptical inner product.

\qquad Let $\mathbf{u}=\left(  u_{1},u_{2},...,u_{n}\right)  $, $\mathbf{w}%
=\left(  w_{1},w_{2},...,w_{n}\right)  \in%
\mathbb{R}
^{n}$ and $a_{1},a_{2},..,a_{n}\in%
\mathbb{R}
^{+}$. Then the map
\[
\mathcal{B}:%
\mathbb{R}
^{n}\times%
\mathbb{R}
^{n}\rightarrow%
\mathbb{R}
,\text{ \ \ }\mathcal{B}\left(  \mathbf{u},\mathbf{w}\right)  =a_{1}u_{1}%
w_{1}+a_{2}u_{2}w_{2}+\cdots+a_{n}u_{n}w_{n}%
\]
is a positive definite scalar product. We call it elliptical inner product or
$\mathcal{B}$-inner product. The real vector space $%
\mathbb{R}
^{n}$ equipped with the elliptical inner product will be represented by
$\mathbb{%
\mathbb{R}
}_{a_{1},a_{2},...,a_{n}}^{n}.$ Note that the scalar product $\mathcal{B}%
\left(  \mathbf{u},\mathbf{v}\right)  $ can be written as $\mathcal{B}\left(
\mathbf{u},\mathbf{w}\right)  =\mathbf{u}^{t}\Omega\mathbf{w}$ where
associated matrix is%
\begin{equation}
\Omega=\left[
\begin{array}
[c]{cccc}%
a_{1} & 0 & \cdots & 0\\
0 & a_{2} & \cdots & 0\\
\vdots & \vdots & \ddots & 0\\
0 & 0 & \cdots & a_{n}%
\end{array}
\right]  . \label{omega}%
\end{equation}
The number $\sqrt{\det\Omega}$ will be called "\textit{constant of the scalar
product}" and denoted by $\Delta$ in the rest of the paper. The norm of a
vector associated with the scalar product $\mathcal{B}$ is defined as
$\left\Vert \mathbf{u}\right\Vert _{\mathcal{B}}=\sqrt{\mathcal{B}\left(
\mathbf{u},\mathbf{u}\right)  }$. Two vectors $\mathbf{u}$ and $\mathbf{w}$
are called $\mathcal{B}$-orthogonal or elliptically orthogonal vectors if
$\mathcal{B}\left(  \mathbf{u},\mathbf{w}\right)  =0$. In addition, if their
norms are 1, then they are called $\mathcal{B}$-orthonormal vectors. If
$\left\{  \mathbf{u}_{1},\mathbf{u}_{2},...,\mathbf{u}_{n}\right\}  $ is an
$\mathcal{B}$-orthonormal base of $\mathbb{%
\mathbb{R}
}_{a_{1},a_{2},...,a_{n}}^{n}$, then $\det\left(  \mathbf{u}_{1}%
,\mathbf{u}_{2},...,\mathbf{u}_{n}\right)  =\Delta^{-1}.$ The cosine of the
angle between two vectors $\mathbf{u}$ and $\mathbf{w}$ is defined as,%
\[
\mathbf{cos}\theta=\dfrac{\mathcal{B}\left(  \mathbf{u},\mathbf{w}\right)
}{\left\Vert \mathbf{u}\right\Vert _{\mathcal{B}}\left\Vert \mathbf{w}%
\right\Vert _{\mathcal{B}}}%
\]
where $\theta$ is compatible with the parameters of the angular parametric
equations of ellipse or ellipsoid.

\bigskip

\qquad Let $\mathcal{B}$ be a non degenerate scalar product, $\Omega$ the
associated matrix of $\mathcal{B}$, and $R\in%
\mathbb{R}
^{n\times n}$ is any matrix.

\qquad\textbf{i)} If $\mathcal{B}\left(  R\mathbf{u},R\mathbf{w}\right)
=\mathcal{B}\left(  \mathbf{u},\mathbf{w}\right)  $ for all vectors
$\mathbf{u,w}\in\mathbb{%
\mathbb{R}
}^{n}$, then $R$ is called a $\mathcal{B}$-orthogonal matrix. It means that
orthogonal matrices preserve the norm of vectors and satisfy the matrix
equality $R^{t}\Omega R=\Omega$. Also, all rows (or columns) are $\mathcal{B}%
$-orthogonal to each other. We denote the set of $\mathcal{B}$-orthogonal
matrices by $\mathbf{O}_{\mathcal{B}}\left(  n\right)  $. That is,
\[
\mathbf{O}_{\mathcal{B}}\left(  n\right)  =\{R\in\mathbb{R}^{n\times n}%
:R^{t}\Omega R=\Omega\text{ and }\det R=\pm1\}.
\]
$\mathbf{O}_{\mathcal{B}}\left(  n\right)  $ is a subgroup of \textbf{Gl}%
$_{\mathcal{B}}\left(  n\right)  .$ It is sometimes called the isometry group
of $%
\mathbb{R}
^{n}$ associated with scalar product $\mathcal{B}$. The determinant of a
$\mathcal{B}$-orthogonal matrix can be either $-1$ or $1.$ If $\det R=1,$ then
we call it a $\mathcal{B}$-rotation matrix or an elliptical rotation matrix.
If $\det R=-1,$ we call it an elliptical reflection matrix. Although the set
$\mathbf{O}_{\mathcal{B}}\left(  n\right)  $ is not a linear subspace of $%
\mathbb{R}
^{n\times n},$ it is a Lie group. The isometry group for the bilinear or
sesquilinear forms can be found in \cite{Macykey1}. The set of the
$\mathcal{B}$-rotation matrices of $\mathbb{%
\mathbb{R}
}^{n}$ can be expressed as follows:%
\[
\mathbf{SO}_{\mathcal{B}}\left(  n\right)  =\{R\in\mathbb{R}^{n\times n}%
:R^{t}\Omega R=\Omega\text{ and }\det R=1\}.
\]
$\mathbf{SO}_{\mathcal{B}}\left(  n\right)  $ is a subgroup of $\mathbf{O}%
_{\mathcal{B}}\left(  n\right)  .$

\medskip

\qquad\textbf{ii)} If $\mathcal{B}\left(  S\mathbf{u},\mathbf{w}\right)
=\mathcal{B}\left(  \mathbf{u},S\mathbf{w}\right)  $ for all vectors
$\mathbf{u,w}\in\mathbb{%
\mathbb{R}
}^{n}$, then $S$ is called a $\mathcal{B}$-symmetric matrix. It satisfies
$S^{t}\Omega=\Omega S$. The set of $\mathcal{B}$-symmetric matrices, defined
by
\[
\mathbb{J}=\left\{  S\in%
\mathbb{R}
^{n\times n}:\mathcal{B}\left(  S\mathbf{u},\mathbf{w}\right)  =\mathcal{B}%
\left(  \mathbf{u},S\mathbf{w}\right)  \text{ for all }\mathbf{u,w}\in%
\mathbb{R}
^{n}\right\}
\]
is a Jordan algebra \cite{Macykey1}. It is a subspace of \ the vector space of
real $n\times n$ matrices, with dimension $n\left(  n+1\right)  /2$. Any
$\mathcal{B}$-symmetric matrix in $\mathbb{%
\mathbb{R}
}_{a_{1},a_{2},...,a_{n}}^{n}$ can be defined as
\begin{equation}
S=\left[  \dfrac{\Delta a_{ij}}{a_{i}}\right]  _{n\times n} \label{duz}%
\end{equation}
where $a_{ij}=a_{ji}$ and $a_{ij}\in%
\mathbb{R}
.$

\medskip

\qquad\textbf{iii)} If $\mathcal{B}\left(  T\mathbf{u},\mathbf{w}\right)
=-\mathcal{B}\left(  \mathbf{u},T\mathbf{w}\right)  $ for all vectors
$\mathbf{u,w}\in\mathbb{%
\mathbb{R}
}^{n}$, then $T$ is called a $\mathcal{B}$-skew-symmetric matrix. Also,
$T^{t}\Omega=-\Omega T$. The set of $\mathcal{B}$-skew symmetric matrices,
defined by
\[
\mathbb{L}=\left\{  T\in%
\mathbb{R}
^{n\times n}:\mathcal{B}\left(  T\mathbf{u},\mathbf{w}\right)  =-\mathcal{B}%
\left(  \mathbf{u},T\mathbf{w}\right)  \text{ for all }\mathbf{u,w}\in
V\right\}
\]
is a Lie algebra \cite{Macykey1}. It is a subspace of the vector space of real
$n\times n$ matrices, with dimension $n(n-1)/2$, as well. Any $\mathcal{B}%
$-skew-symmetric matrix in $\mathbb{%
\mathbb{R}
}_{a_{1},a_{2},...,a_{n}}^{n}$ can be defined as,%
\begin{equation}
T=\left[  t_{ij}\right]  _{n\times n}\text{ \ \ with \ \ }t_{ij}=\left\{
\begin{array}
[c]{cc}%
\dfrac{\Delta a_{ij}}{a_{i}}\smallskip & i>j\\
\dfrac{-\Delta a_{ij}}{a_{i}}\smallskip & i<j\\
0 & i=j
\end{array}
\right.  \label{ters}%
\end{equation}
where $a_{ij}=a_{ji}$ and $a_{ij}\in%
\mathbb{R}
.$

\bigskip

For example, in the scalar product space $%
\mathbb{R}
_{a_{1},a_{2},a_{3}}^{3},$ the symmetric and skew symmetric matrices are

\begin{center}
$S=\Delta\left[
\begin{array}
[c]{ccc}%
a_{11}/a_{1} & x/a_{1} & y/a_{1}\\
x/a_{2} & a_{22}/a_{2} & z/a_{2}\\
y/a_{3} & z/a_{3} & a_{33}/a_{3}%
\end{array}
\right]  $ \ \ and \ $T=\Delta\left[
\begin{array}
[c]{ccc}%
0 & x/a_{1} & y/a_{1}\\
-x/a_{2} & 0 & z/a_{2}\\
-y/a_{3} & -z/a_{3} & 0
\end{array}
\right]  .$
\end{center}

Note that, even if we omit the scalar product constant $\Delta$ in $S$ or $T,$
they will still be symmetric or skew symmetric matrix, respectively. But then,
we cannot generate elliptical rotation matrices using the Rodrigues and Cayley
formulas. So, we will keep the constant $\Delta$.

\bigskip

\qquad Now, we define the elliptical vector product, which is related to
elliptical inner product. Let \textbf{$u$}$_{i}=\left(  u_{i1},u_{i2}%
,...,u_{in}\right)  \in%
\mathbb{R}
^{n}$ for $i=1,2,...,n-1$ and $\mathbf{e}_{1},\mathbf{e}_{2},...,\mathbf{e}%
_{n}$ be standard unit vectors for $\mathcal{B}.$ Then, the\ elliptical vector
product in $\mathbb{%
\mathbb{R}
}_{a_{1},a_{2},...,a_{n}}^{n}$ is defined as,%
\begin{align*}
\mathbb{%
\mathbb{R}
}_{a_{1},a_{2},...,a_{n}}^{n}\times\mathbb{%
\mathbb{R}
}_{a_{1},a_{2},...,a_{n}}^{n}\times\cdots\times\mathbb{%
\mathbb{R}
}_{a_{1},a_{2},...,a_{n}}^{n}  &  \rightarrow\mathbb{%
\mathbb{R}
}_{a_{1},a_{2},...,a_{n}}^{n},\\
\left(  \mathbf{u}_{1},\mathbf{u}_{2},...,\mathbf{u}_{n}\right)   &
\rightarrow\mathcal{V}\left(  \mathbf{u}_{1}\times\mathbf{u}_{2}%
\times\mathbf{u}_{3}\times\cdots\times\mathbf{u}_{n-1}\right)
\end{align*}%
\begin{equation}
\mathcal{V}\left(  \mathbf{u}_{1}\times\mathbf{u}_{2}\times\mathbf{u}%
_{3}\times\cdots\times\mathbf{u}_{n-1}\right)  =\Delta\det\left[
\begin{array}
[c]{ccccc}%
\mathbf{e}_{1}/a_{1} & \mathbf{e}_{2}/a_{2} & \mathbf{e}_{3}/a_{3} & \cdots &
\mathbf{e}_{n}/a_{n}\\
u_{11} & u_{12} & u_{13} & \cdots & u_{1n}\\
u_{21} & u_{22} & u_{23} & \cdots & u_{2n}\\
\vdots & \vdots & \vdots & \vdots & \vdots\\
u_{\left(  n-1\right)  1} & u_{\left(  n-1\right)  2} & u_{\left(  n-1\right)
3} & \cdots & u_{\left(  n-1\right)  n}%
\end{array}
\right]  \label{vect}%
\end{equation}
The vector $\mathcal{V}\left(  \mathbf{u}_{1}\times\mathbf{u}_{2}%
\times\mathbf{u}_{3}\times\cdots\times\mathbf{u}_{n-1}\right)  $ is
$\mathcal{B}$-orthogonal to each of the vectors $\mathbf{u}_{1},\mathbf{u}%
_{2},\mathbf{u}_{3},...,\mathbf{u}_{n-1}.$

\section{Generating an Elliptical Rotation Matrix}

\qquad In this section, we will generate elliptical rotation matrices using
the elliptical versions of the classical methods. For a given ellipse in the
form
\begin{equation}
\left(  \mathbf{E}\right)  :\lambda a_{1}x^{2}+\lambda a_{2}y^{2}=1,\text{
\ \ \ }\lambda,a_{1},a_{2}\in%
\mathbb{R}
^{+} \label{ellipse}%
\end{equation}
we will use the scalar product $\mathcal{B}\left(  \mathbf{u},\mathbf{w}%
\right)  =a_{1}u_{1}w_{1}+a_{2}u_{2}w_{2}.$ That is, our scalar product space
is $%
\mathbb{R}
_{a_{1},a_{2}}^{2}.$ An elliptical rotation matrix represents a rotation on
$\left(  \mathbf{E}\right)  $ or any ellipse similar to $\left(
\mathbf{E}\right)  .$ Recall that ellipses with the same eccentricity are
called similar. Since the shape of an ellipse depends only on the ratio
$a_{1}/a_{2},$ $\lambda$ in Equation (\ref{ellipse}), does not affect the
rotation matrix.

\subsection{Rodrigues Rotation Formula}

$\qquad$The Rodrigues rotation formula is a useful method for generating
rotation matrices. Given a rotation axis and an angle, we can readily generate
a rotation matrix using this method. $\mathbf{SO}(n)$ is a Lie group and the
space of skew-symmetric matrices of dimension $n$ is the Lie algebra of
$\mathbf{SO}(n)$. We denote this Lie algebra by $\mathfrak{so}$$\left(
n\right)  .$ The exponential map defined by the standard matrix exponential
series $e^{A}$ connects it to the Lie group. For any skew-symmetric matrix A,
the matrix exponential $e^{A}$ always gives a rotation matrix. This method is
known as the Rodrigues formula.

\subsubsection{Elliptical Rotations In the Plane}

\qquad According to $\left(  \text{\ref{ters}}\right)  $ a skew symmetric
matrix can be expressed as
\[
T=\left[
\begin{array}
[c]{cc}%
0 & -\sqrt{a_{2}}/\sqrt{a_{1}}\\
\sqrt{a_{1}}/\sqrt{a_{2}} & 0
\end{array}
\right]  .
\]

The equality $T^{t}\Omega=-\Omega T$ is satisfied. The characteristic
polynomial of $T$ is, $P\left(  x\right)  =x^{2}+1.$ So, $T^{2}=-I$. We can
obtain the elliptical rotation matrix easily using the matrix exponential.

\begin{theorem}
Let $T$ be a $\mathcal{B}$-skew symmetric matrix. Then, the matrix
exponential
\[
R_{\theta}^{\mathcal{B}}=e^{T\theta}=I+\left(  \sin\theta\right)  T+\left(
1-\cos\theta\right)  T^{2}=\left[
\begin{array}
[c]{cc}%
\cos\theta & -\dfrac{\sqrt{a_{2}}}{\sqrt{a_{1}}}\sin\theta\\
\dfrac{\sqrt{a_{1}}}{\sqrt{a_{2}}}\sin\theta & \cos\theta
\end{array}
\right]
\]
gives an elliptical rotation along the ellipse $\lambda a_{1}x^{2}+\lambda
a_{2}y^{2}=1,$ $\lambda,a_{1},a_{2}\in%
\mathbb{R}
^{+}.$ That is, $R_{\theta}^{\mathcal{B}}$ is an elliptical rotation matrix in
the space $%
\mathbb{R}
_{a_{1},a_{2}}^{3}.\allowbreak$
\end{theorem}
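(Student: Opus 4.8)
The plan is to verify the claimed formula by computing $T^2$ explicitly, plugging into the matrix exponential series, and then checking directly that the resulting matrix $R_\theta^{\mathcal B}$ lies in $\mathbf{SO}_{\mathcal B}(2)$. First I would record that, from the stated form of $T$,
\[
T^2=\left[\begin{array}{cc}0 & -\sqrt{a_2}/\sqrt{a_1}\\ \sqrt{a_1}/\sqrt{a_2} & 0\end{array}\right]^2=-I,
\]
as already noted in the excerpt via the characteristic polynomial $P(x)=x^2+1$. Consequently $T^{2k}=(-1)^kI$ and $T^{2k+1}=(-1)^kT$, so the exponential series $e^{T\theta}=\sum_{k\ge0}\theta^kT^k/k!$ splits into the even part $\bigl(\sum (-1)^k\theta^{2k}/(2k)!\bigr)I=(\cos\theta)I$ and the odd part $\bigl(\sum(-1)^k\theta^{2k+1}/(2k+1)!\bigr)T=(\sin\theta)T$. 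This already gives $e^{T\theta}=(\cos\theta)I+(\sin\theta)T$; since $T^2=-I$, one may equivalently write $I+(\sin\theta)T+(1-\cos\theta)T^2$, matching the theorem's display. Substituting the entries of $T$ then yields exactly the $2\times2$ matrix in the statement.

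Next I would verify the two defining conditions of an elliptical rotation matrix from part (i) of the Preliminaries: $\det R_\theta^{\mathcal B}=1$ and $(R_\theta^{\mathcal B})^t\Omega R_\theta^{\mathcal B}=\Omega$, where $\Omega=\mathrm{diag}(a_1,a_2)$ and $\Delta=\sqrt{a_1a_2}$. The determinant is $\cos^2\theta-\bigl(-\tfrac{\sqrt{a_2}}{\sqrt{a_1}}\sin\theta\bigr)\bigl(\tfrac{\sqrt{a_1}}{\sqrt{a_2}}\sin\theta\bigr)=\cos^2\theta+\sin^2\theta=1$, so $R_\theta^{\mathcal B}\in\mathbf{SO}_{\mathcal B}(2)$ provided the orthogonality relation holds. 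For the latter, a direct multiplication of $(R_\theta^{\mathcal B})^t\,\mathrm{diag}(a_1,a_2)\,R_\theta^{\mathcal B}$ gives diagonal entries $a_1\cos^2\theta+a_2\cdot\tfrac{a_1}{a_2}\sin^2\theta=a_1$ and $a_1\cdot\tfrac{a_2}{a_1}\sin^2\theta+a_2\cos^2\theta=a_2$, while the off-diagonal entries cancel because the $\sqrt{a_1a_2}$-weighted cross terms from the two mixed products are negatives of each other. Hence $(R_\theta^{\mathcal B})^t\Omega R_\theta^{\mathcal B}=\Omega$, confirming $R_\theta^{\mathcal B}$ is a $\mathcal B$-rotation matrix. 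Alternatively, since $T$ is $\mathcal B$-skew-symmetric ($T^t\Omega=-\Omega T$), one gets $(e^{T\theta})^t\Omega e^{T\theta}=e^{T^t\theta}\Omega e^{T\theta}=\Omega e^{-T\theta}e^{T\theta}=\Omega$ for free, and $\det e^{T\theta}=e^{\theta\,\mathrm{tr}\,T}=e^0=1$; I would present this cleaner Lie-theoretic argument as the main line and keep the explicit check as a sanity verification.

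Finally I would connect the algebra to the geometry: a point $(x,y)$ on $\lambda a_1x^2+\lambda a_2y^2=1$ has $\mathcal B$-norm-square $a_1x^2+a_2y^2=1/\lambda$, and since $R_\theta^{\mathcal B}$ preserves the $\mathcal B$-norm it maps this ellipse to itself, tracing the motion in the parameter $\theta$ of the parametrization $(x,y)=\bigl(\tfrac{1}{\sqrt{\lambda a_1}}\cos\theta,\tfrac{1}{\sqrt{\lambda a_2}}\sin\theta\bigr)$; one checks $R_\theta^{\mathcal B}$ applied to the $\theta=0$ point gives the general-$\theta$ point, so $\theta$ really is the elliptical rotation angle. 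Since the shape of $(\mathbf E)$ depends only on $a_1/a_2$, the same matrix works for every similar ellipse, as remarked before the subsection. I do not anticipate a genuine obstacle here — the computation is short — but the one place to be careful is bookkeeping with the square roots $\sqrt{a_1},\sqrt{a_2}$ and the constant $\Delta$ in the skew-symmetric form, making sure the $(2,1)$ and $(1,2)$ entries carry reciprocal ratios so that both $\det=1$ and $T^2=-I$ come out exactly; a sign or a swapped ratio there would break the orthogonality relation. (I would also flag the evident typo "$\mathbb R^3_{a_1,a_2}$" in the statement, which should read $\mathbb R^2_{a_1,a_2}$.)
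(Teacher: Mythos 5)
Your proposal is correct; the paper gives no explicit proof of this two-dimensional theorem (it only records $T^{2}=-I$ beforehand and proves the three-dimensional analogue by the same power-series expansion), and your argument --- splitting $e^{T\theta}$ into even and odd parts via $T^{2}=-I$ to get $(\cos\theta)I+(\sin\theta)T$, then verifying $\det R_{\theta}^{\mathcal{B}}=1$ and $(R_{\theta}^{\mathcal{B}})^{t}\Omega R_{\theta}^{\mathcal{B}}=\Omega$ --- is exactly the intended route, carried out in full. The Lie-theoretic check of $\mathcal{B}$-orthogonality together with $\det e^{T\theta}=e^{\theta\,\mathrm{tr}\,T}=1$ is a clean addition the paper omits, and your observation that $\mathbb{R}_{a_{1},a_{2}}^{3}$ in the statement should read $\mathbb{R}_{a_{1},a_{2}}^{2}$ identifies a genuine typo.
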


\setcounter{theorem}{0}

\begin{remark}
All similar ellipses have identical elliptical rotation matrices.
\end{remark}

\setcounter{theorem}{0}

\begin{example}
Let's consider the ellipse $\left(  \mathbf{E}_{1}\right)  :\dfrac{x^{2}}%
{9}+\dfrac{y^{2}}{4}=1$ with the parametrization $\alpha\left(  \theta\right)
=\left(  3\cos\theta,2\sin\theta\right)  $. Let's take the points
\[
A=\alpha\left(  \pi/4\right)  =\left(  3\sqrt{2}/2,\sqrt{2}\right)  \text{
\ and \ }B=\alpha\left(  \pi/4+\pi/3\right)  =(3\sqrt{2}/4-3\sqrt{6}%
/4,\sqrt{2}/2+\sqrt{6}/2).
\]
That is, if we rotate the point $A$ elliptically through the angle $\pi/3,$ we
get $B.$ Now, let's find elliptical rotation matrix for these ellipses using
the above theorem and get same results. To calculate the elliptical rotation
matrix, first, we choose the elliptical inner product
\[
\mathcal{B}\left(  \mathbf{u},\mathbf{w}\right)  =\dfrac{u_{1}w_{1}}{9}%
+\dfrac{u_{2}w_{2}}{4}.
\]
in accordance with the ellipse $\left(  \mathbf{E}_{1}\right)  $ such that
$\mathbf{u}=\left(  u_{1},u_{2}\right)  $ and $\mathbf{w}=\left(  w_{1}%
,w_{2}\right)  $. \ That is, our space is $%
\mathbb{R}
_{1/9,1/4}^{2}$, $\Delta=1/6$ and the $\mathcal{B}$-skew symmetric matrix has
the form%
\[
T=\left[
\begin{array}
[c]{cc}%
0 & -3/2\\
2/3 & 0
\end{array}
\right]  .
\]
Note that, $T^{2}=-I.$ We can obtain elliptical rotation matrix as,
\[
R_{\theta}^{\mathcal{B}}=e^{\theta T}=\left[
\begin{array}
[c]{cc}%
\cos\theta & -\dfrac{3\sin\theta}{2}\\
\dfrac{2\sin\theta}{3} & \cos\theta
\end{array}
\right]  .
\]
where $R_{\theta}^{\mathcal{B}}$ is a $\mathcal{B}$-orthogonal matrix in $%
\mathbb{R}
_{1/9,1/4}^{2}.$ Namely, the equalities $\det R_{\theta}^{\mathcal{B}}=1$ and
$\left(  R_{\theta}^{\mathcal{B}}\right)  ^{t}\Omega\left(  R_{\theta
}^{\mathcal{B}}\right)  =\Omega$ are satisfied. \ For $\theta=\pi/3,$ we get
\[
R_{\pi/3}^{\mathcal{B}}=\left[
\begin{array}
[c]{cc}%
1/2 & -3\sqrt{3}/4\\
\sqrt{3}/3 & 1/2
\end{array}
\right]  .
\]
So, if we rotate the point $A$ elliptically, we get $B=R_{\theta}%
^{\mathcal{B}}\left(  A\right)  =(3\sqrt{2}/4-3\sqrt{6}/4,\sqrt{2}/2+\sqrt
{6}/2).$ Thus, we get same result using elliptical rotation matrix for
$\left(  \mathbf{E}_{1}\right)  .$ Note that, $\left\Vert R_{\theta
}^{\mathcal{B}}\left(  A\right)  \right\Vert _{\mathcal{B}}=1$ and the angle
between $\mathbf{x=}\overrightarrow{OA}$ and $\mathbf{y=}\overrightarrow{OB}$
is
\[
\cos\theta=\dfrac{\mathcal{B}\left(  \mathbf{x,y}\right)  }{\left\Vert
\mathbf{x}\right\Vert _{\mathcal{B}}\left\Vert \mathbf{y}\right\Vert
_{\mathcal{B}}}=\dfrac{1}{2}.
\]
It can be seen that the elliptical rotation matrix $R_{\theta}^{\mathcal{B}}$
can be also used to interpret the motion on a similar ellipse $\dfrac{x^{2}%
}{36}+\dfrac{y^{2}}{16}=1$ to $\left(  \mathbf{E}_{1}\right)  .$
\end{example}

\begin{figure}[ptb]
\centering
\includegraphics{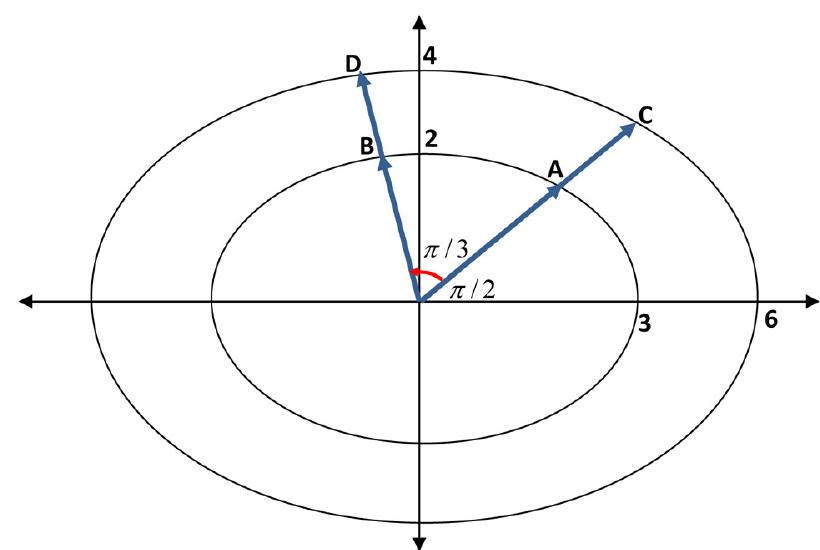} \caption{$\alpha\left(  \theta\right)  =\left(
3\cos\theta,2\sin\theta\right)  $ and $\beta\left(  \theta\right)  =\left(
6\cos\theta,4\sin\theta\right)  $}%
\label{fig:elephant}%
\end{figure}

\subsubsection{3-Dimensional Elliptical Rotations}

\qquad Let's take the ellipsoid $a_{1}x^{2}+a_{2}y^{2}+a_{3}z^{2}=1.$ The
scalar product for this ellipsoid is
\[
\mathcal{B}\left(  \mathbf{u},\mathbf{w}\right)  =a_{1}u_{1}w_{1}+a_{2}%
u_{2}w_{2}+a_{3}u_{1}w_{3},
\]
for $\mathbf{u}=\left(  u_{1},u_{2},u_{3}\right)  $ and $\mathbf{v=}\left(
v_{1},v_{2},v_{3}\right)  .$ Also, the vector product is

\begin{center}
$\mathcal{V}\left(  \mathbf{u}\times\mathbf{v}\right)  $=$\Delta\det\left[
\begin{array}
[c]{ccc}%
\mathbf{e}_{1}/a_{1} & \mathbf{e}_{2}/a_{2} & \mathbf{e}_{3}/a_{3}\\
u_{1} & u_{2} & u_{3}\\
v_{1} & v_{2} & v_{3}%
\end{array}
\right]  $=$\Delta\left[
\begin{array}
[c]{ccc}%
0 & -u_{3}/a_{1} & u_{2}/a_{1}\\
u_{3}/a_{2} & 0 & -u_{1}/a_{2}\\
-u_{2}/a_{3} & u_{1}/a_{3} & 0
\end{array}
\right]  \left[
\begin{array}
[c]{c}%
v_{1}\\
v_{2}\\
v_{3}%
\end{array}
\right]  $=$T\left(  \mathbf{v}^{t}\right)  $
\end{center}

where $\Delta=\sqrt{a_{1}a_{2}a_{3}}.$ The matrix%
\begin{equation}
T=\Delta\left[
\begin{array}
[c]{ccc}%
0 & -u_{3}/a_{1} & u_{2}/a_{1}\\
u_{3}/a_{2} & 0 & -u_{1}/a_{2}\\
-u_{2}/a_{3} & u_{1}/a_{3} & 0
\end{array}
\right]  \label{3ters}%
\end{equation}
is skew symmetric in $%
\mathbb{R}
_{a_{1},a_{2},a_{3}}^{3}.$ That is, $T^{t}\Omega=-\Omega T$. So, the vector
product in $%
\mathbb{R}
_{a_{1},a_{2},a_{3}}^{3}$ can be viewed as a linear transformation, which
corresponds to multiplication by\ a skew symmetric matrix. The characteristic
polynomial of $T$ is, $P\left(  x\right)  =x^{3}+\left\Vert \mathbf{u}%
\right\Vert ^{2}x$ \ whose eigenvalues are $x_{1}=0$ and $x_{2,3}%
=\pm\left\Vert \mathbf{u}\right\Vert i.$ According to characteristic
polynomial $T^{3}+\left\Vert \mathbf{u}\right\Vert ^{2}T=0.$ So, if we take a
unit vector $\mathbf{u}\in%
\mathbb{R}
_{a_{1},a_{2},a_{3}}^{3},$ we get $T^{3}=-T$ and we can use Rodrigues and
Cayley formulas.

\bigskip

\begin{theorem}
Let $T$ be a skew symmetric matrix in the form $\left(  \text{\ref{3ters}%
}\right)  $ such that $\mathbf{u}=\left(  u_{1},u_{2},u_{3}\right)  \in%
\mathbb{R}
_{a_{1},a_{2},a_{3}}^{3}$ is a unit vector. Then, the matrix exponential
\[
R_{\theta}^{\mathcal{B},\mathbf{u}}=e^{T\theta}=I+\left(  \sin\theta\right)
T+\left(  1-\cos\theta\right)  T^{2}%
\]
gives an elliptical rotation on the ellipsoid $a_{1}x^{2}+a_{2}y^{2}%
+a_{3}z^{2}=1.$ Furthermore, the matrix $R_{\theta}^{\mathcal{B},\mathbf{u}}$
can be expressed as
\begin{equation}
\left[
\begin{array}
[c]{ccc}%
a_{1}u_{1}^{2}+\left(  1-a_{1}u_{1}^{2}\right)  \cos\theta & -\tfrac{\Delta
u_{3}\sin\theta}{a_{1}}-a_{2}u_{1}u_{2}\left(  \cos\theta-1\right)  &
\tfrac{\Delta u_{2}\sin\theta}{a_{1}}-a_{3}u_{1}u_{3}\left(  \cos
\theta-1\right) \\
\tfrac{\Delta u_{3}\sin\theta}{a_{2}}-a_{1}u_{1}u_{2}\left(  \cos
\theta-1\right)  & a_{2}u_{2}^{2}+\left(  1-a_{2}u_{2}^{2}\right)  \cos\theta
& -\tfrac{\Delta u_{1}\sin\theta}{a_{2}}-a_{3}u_{2}u_{3}\left(  \cos
\theta-1\right) \\
-\tfrac{\Delta u_{2}\sin\theta}{a_{3}}-a_{1}u_{1}u_{3}\left(  \cos
\theta-1\right)  & \tfrac{\Delta u_{1}\sin\theta}{a_{3}}-a_{2}u_{2}%
u_{3}\left(  \cos\theta-1\right)  & a_{3}u_{3}^{2}+\left(  1-a_{3}u_{3}%
^{2}\right)  \cos\theta
\end{array}
\right]  \label{rodmat}%
\end{equation}
where $\mathbf{u}=\left(  u_{1},u_{2},u_{3}\right)  $ is the rotation axis and
$\theta$ is the elliptical rotation angle.
\end{theorem}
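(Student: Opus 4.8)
The plan is to obtain the closed form from the matrix exponential series, then read off the explicit entries, and finally verify that the resulting matrix lies in $\mathbf{SO}_{\mathcal{B}}(3)$ and fixes $\mathbf{u}$.

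\emph{Step 1: summing the series.} Since $\mathbf{u}$ is a $\mathcal{B}$-unit vector, $\left\Vert \mathbf{u}\right\Vert _{\mathcal{B}}^{2}=a_{1}u_{1}^{2}+a_{2}u_{2}^{2}+a_{3}u_{3}^{2}=1$, so the relation $T^{3}+\left\Vert \mathbf{u}\right\Vert _{\mathcal{B}}^{2}T=0$ noted just before the statement reduces to $T^{3}=-T$. Iterating gives $T^{2m+1}=(-1)^{m}T$ and $T^{2m+2}=(-1)^{m}T^{2}$ for all $m\geq 0$. Substituting these into $e^{T\theta}=\sum_{k\geq 0}\theta^{k}T^{k}/k!$ and separating the odd and even powers, the odd part collapses to $\bigl(\theta-\tfrac{\theta^{3}}{3!}+\tfrac{\theta^{5}}{5!}-\cdots\bigr)T=(\sin\theta)\,T$ and the even part, apart from the leading $I$, collapses to $\bigl(\tfrac{\theta^{2}}{2!}-\tfrac{\theta^{4}}{4!}+\cdots\bigr)T^{2}=(1-\cos\theta)\,T^{2}$. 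Hence $R_{\theta}^{\mathcal{B},\mathbf{u}}=e^{T\theta}=I+(\sin\theta)\,T+(1-\cos\theta)\,T^{2}$.

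\emph{Step 2: explicit entries.} The only computation needed is $T^{2}$. Writing $\Omega^{-1}=\operatorname{diag}(a_{1}^{-1},a_{2}^{-1},a_{3}^{-1})$ and factoring $T=\Delta\,\Omega^{-1}K$ with $K$ the ordinary cross-product matrix of $\mathbf{u}$, a direct $3\times 3$ multiplication (equivalently, the elliptical triple-product identity $\mathcal{V}\bigl(\mathbf{u}\times(\mathbf{u}\times\mathbf{v})\bigr)=\mathcal{B}(\mathbf{u},\mathbf{v})\,\mathbf{u}-\mathcal{B}(\mathbf{u},\mathbf{u})\,\mathbf{v}$) yields $T^{2}=\mathbf{u}\mathbf{u}^{t}\Omega-I$, using $\mathcal{B}(\mathbf{u},\mathbf{u})=1$. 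Therefore $R_{\theta}^{\mathcal{B},\mathbf{u}}=(\cos\theta)\,I+(\sin\theta)\,T+(1-\cos\theta)\,\mathbf{u}\mathbf{u}^{t}\Omega$, and since $(\mathbf{u}\mathbf{u}^{t}\Omega)_{ij}=a_{j}u_{i}u_{j}$ while the $T_{ij}$ are read from $(\ref{3ters})$, expanding the nine entries reproduces exactly the matrix in $(\ref{rodmat})$.

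\emph{Step 3: rotation on the ellipsoid.} Because $T$ has zero diagonal, $\det R_{\theta}^{\mathcal{B},\mathbf{u}}=\det e^{T\theta}=e^{\operatorname{tr}(T\theta)}=e^{0}=1$. For $R^{t}\Omega R=\Omega$, use that $T$ is $\mathcal{B}$-skew-symmetric, i.e. $T^{t}\Omega=-\Omega T$, hence $T^{t}=-\Omega T\Omega^{-1}$ and $e^{T^{t}\theta}=\Omega e^{-T\theta}\Omega^{-1}$; then $R^{t}\Omega R=e^{T^{t}\theta}\,\Omega\,e^{T\theta}=\Omega e^{-T\theta}e^{T\theta}=\Omega$. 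Thus $R_{\theta}^{\mathcal{B},\mathbf{u}}\in\mathbf{SO}_{\mathcal{B}}(3)$, so it preserves $\left\Vert \cdot\right\Vert _{\mathcal{B}}$ and maps the ellipsoid $a_{1}x^{2}+a_{2}y^{2}+a_{3}z^{2}=1$, which is the $\mathcal{B}$-unit sphere $\left\Vert \mathbf{x}\right\Vert _{\mathcal{B}}=1$, onto itself. Finally $T\mathbf{u}=\mathcal{V}(\mathbf{u}\times\mathbf{u})=0$ (a determinant with two equal rows), so $R_{\theta}^{\mathcal{B},\mathbf{u}}\mathbf{u}=\mathbf{u}$, exhibiting $\mathbf{u}$ as the fixed axis; restricted to the $\mathcal{B}$-orthogonal complement of $\mathbf{u}$ the map acts as the planar elliptical rotation by $\theta$ of the previous subsection, so $\theta$ is indeed the elliptical rotation angle. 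The only genuinely delicate points are the identity $T^{2}=\mathbf{u}\mathbf{u}^{t}\Omega-I$ (together with the ensuing entry-by-entry match with $(\ref{rodmat})$, which is just tedious) and the conjugation bookkeeping $e^{T^{t}\theta}=\Omega e^{-T\theta}\Omega^{-1}$ in the orthogonality check; nothing else is deep.
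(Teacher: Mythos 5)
Your proof is correct and follows the same core route as the paper: use $T^{3}=-T$ (from $\left\Vert\mathbf{u}\right\Vert_{\mathcal{B}}=1$) to collapse the exponential series to $I+(\sin\theta)T+(1-\cos\theta)T^{2}$, then expand the entries using the unit-norm relation (your identity $T^{2}=\mathbf{u}\mathbf{u}^{t}\Omega-I$ is just a cleaner packaging of the paper's substitution $-a_{2}u_{2}^{2}-a_{3}u_{3}^{2}=a_{1}u_{1}^{2}-1$). Your Step 3 (checking $\det R=1$, $R^{t}\Omega R=\Omega$, and $R\mathbf{u}=\mathbf{u}$) goes beyond what the paper writes for this theorem and is a worthwhile addition, since the paper asserts but does not verify that the exponential lands in $\mathbf{SO}_{\mathcal{B}}(3)$.
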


\begin{proof}
Since $\mathbf{u}$ is a unit vector in $%
\mathbb{R}
_{a_{1},a_{2},a_{3}}^{3}$, we have $T^{3}=-T$. So, we get
\begin{align*}
R_{\theta}^{\mathcal{B},\mathbf{u}}  &  =e^{\theta T}=I+\theta T+\dfrac
{\theta^{2}T^{2}}{2!}+\dfrac{-\theta^{3}T}{3!}+\dfrac{-\theta^{4}T^{2}}%
{4!}+\dfrac{\theta^{5}T}{5!}+\dfrac{\theta^{6}T^{2}}{6!}\cdots\\
&  =I+T\left(  \theta-\dfrac{\theta^{3}}{3!}+\dfrac{\theta^{5}}{5!}%
-\cdots\right)  \text{+}T^{2}\left(  \dfrac{\theta^{2}}{2!}-\dfrac{\theta^{4}%
}{4!}+\dfrac{\theta^{6}}{6!}-\cdots\right) \\
&  =I+T\left(  \theta-\dfrac{\theta^{3}}{3!}+\dfrac{\theta^{5}}{5!}%
-\cdots\right)  \text{+}T^{2}\left(  1-\left(  1-\dfrac{\theta^{2}}{2!}%
+\dfrac{\theta^{4}}{4!}-\dfrac{\theta^{6}}{6!}+\cdots\right)  \right) \\
&  =I+\left(  \sin\theta\right)  T+\left(  1-\cos\theta\right)  T^{2}.
\end{align*}
If we expand this formula using $-a_{2}u_{2}^{2}-a_{3}u_{3}^{2}=a_{1}u_{1}%
^{2}-1,$ we can obtain the rotation matrix as (\ref{rodmat}).
\end{proof}

\begin{figure}[ptbh]
\centering \includegraphics[scale=0.9]{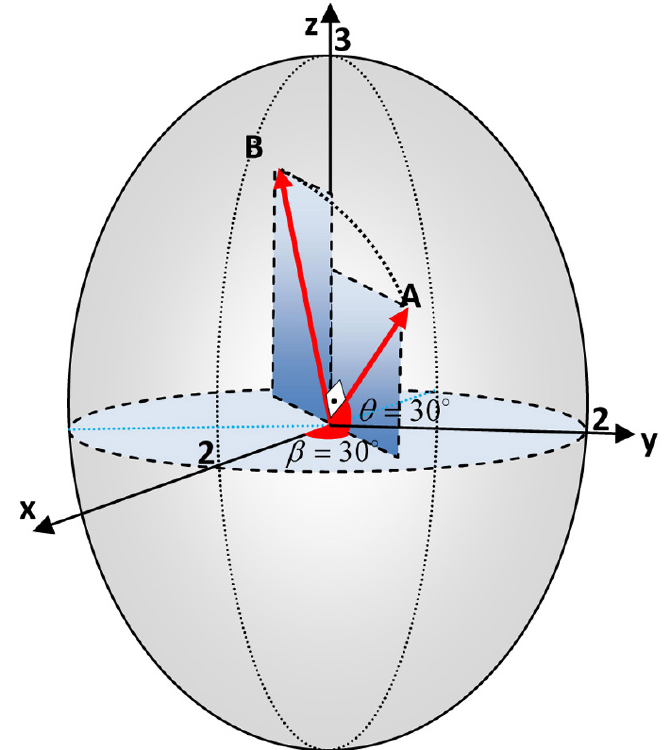}\caption{{}}%
\end{figure}

\setcounter{theorem}{1}

\begin{example}
A parametrization of the ellipsoid $\dfrac{x^{2}}{4}+\dfrac{y^{2}}{4}%
+\dfrac{z^{2}}{9}=1$ is
\[
\alpha\left(  \theta,\beta\right)  =\left(  2\cos\theta\cos\beta,2\cos
\theta\sin\beta,3\sin\theta\right)
\]
where $\theta\in\lbrack0,\pi)$ and $\beta\in\lbrack0,2\pi).$ Let's take the
points
\[
A\text{=}\alpha\left(  30^{\circ},30^{\circ}\right)  \text{=}\left(
3/2,\sqrt{3}/2,3/2\right)  \text{ \ and \ }B\text{=}\alpha\left(  120^{\circ
},30^{\circ}\right)  \text{=}\left(  -\sqrt{3}/2,-1/2,3\sqrt{3}/2\right)
\]
on the ellipsoid. Let's find the rotation matrix which is rotate the point $A$
to $B$ elliptically. We have $a_{1}=a_{2}=1/4$ and $a_{3}=1/9.$ So,
$\Delta=1/12.$ First, using the vector product of $\mathbf{x}=\overrightarrow
{OA}$ and $\mathbf{y}=\overrightarrow{OB}$ in $\mathbb{%
\mathbb{R}
}_{1/4,1/4,1/9}^{3},$ we find the rotation axis $\mathbf{u}.$
\[
\mathcal{V}\left(  \mathbf{x}\times\mathbf{y}\right)  =\dfrac{1}%
{12}\left\vert
\begin{array}
[c]{ccc}%
4i & 4j & 9k\\
3/2 & \sqrt{3}/2 & 3/2\\
-\sqrt{3}/2 & -\dfrac{1}{2} & 3\sqrt{3}/2
\end{array}
\right\vert =\left(  1,-\sqrt{3},0\right)  .
\]
Since $\mathcal{V}\left(  \mathbf{x}\times\mathbf{y}\right)  $ is unit vector
in $\mathbb{%
\mathbb{R}
}_{1/4,1/4,1/9}^{3},$ we get $\mathbf{u=(}1,-\sqrt{3},0).$ Thus, we obtain the
elliptical rotation matrix%
\[
R_{\theta}^{\mathbf{u}}T\left(  \theta\right)  =\dfrac{1}{12}\left[
\begin{array}
[c]{ccc}%
9\cos\theta+3 & 3\sqrt{3}\left(  \cos\theta-1\right)  & -4\sqrt{3}\sin\theta\\
3\sqrt{3}\left(  \cos\theta-1\right)  & 3\cos\theta+9 & -4\sin\theta\\
9\sqrt{3}\sin\theta & 9\sin\theta & 12\cos\theta
\end{array}
\allowbreak\right]
\]
by using (\ref{rodmat}). This matrix describes an elliptical rotation on a
great ellipse such that it is intersection of the ellipsoid and the plane
passing through the origin and $\mathbf{\mathcal{B}}$-orthogonal to
$\mathbf{u.}$ It can be easily found that equation of the plane is $x=\sqrt
{3}y.$ So, $R_{\theta}^{\mathbf{u}}$ represents an elliptical rotation over
the the great ellipse is $y^{2}+\dfrac{1}{9}z^{2}=1,$ $y=\sqrt{3}x$. Also, the
elliptical rotation angle is $\pi/2,$ since $\cos\theta=\mathcal{B}\left(
\mathbf{x},\mathbf{y}\right)  =0$ (Figure 2b). Thus, we find
\begin{equation}
R_{\pi/2}^{\mathbf{u}}=\dfrac{1}{12}\left[
\begin{array}
[c]{ccc}%
3 & -3\sqrt{3} & -4\sqrt{3}\\
-3\sqrt{3} & 9 & -4\\
9\sqrt{3} & 9 & 0
\end{array}
\right]  . \label{examp2}%
\end{equation}
The matrix (\ref{examp2}), rotates the point $A$ to the point $B$ elliptically
over the great ellipse $\dfrac{x^{2}}{2}+\dfrac{z^{2}}{9}=1,$ $y=x.$
\end{example}

\setcounter{theorem}{1}

\begin{remark}
The eigenvalues of the matrix (\ref{rodmat}) are, $x_{1}=e^{i\theta},$
$x_{2}=e^{-i\theta}$ and $x_{3}=1$. Also, the eigenvector corresponding to $1$
is the vector $\mathbf{u}$, the rotation axis of the motion.
\end{remark}

\subsection{Elliptical Cayley Rotation Matrix}

\qquad In 1846, Arthur Cayley discovered a formula to express special
orthogonal matrices using skew-symmetric matrices. It is called Cayley
rotation matrix. In this section, we will describe the Cayley rotation matrix
for any ellipsoid whose scalar product is $\mathcal{B}$. We call it
$\mathcal{B}$-Cayley rotation matrix.

\qquad The Cayley rotation matrix is a useful tool that gives a
parameterization for rotation matrices without the need to use trigonometric
functions. Let $A$ be an $n\times n$ skew symmetric matrix where $\left(
I-A\right)  $ is invertible. Cayley formula transforms the matrix $A$ into
$\left(  I+A\right)  \left(  I-A\right)  ^{-1}.$ \textbf{SO}$(n)$ is
isomorphic to $\mathfrak{so}$$(n)$ via the Cayley formula where $\mathfrak{so}%
$$(n)$ denotes the space of skew symmetric matrices, usually associated with
the Lie algebra of the transformation group defined by rotations in
\textbf{SO}$(n)$. That is, the Cayley map is defined as%
\begin{align*}
C  &  :\text{$\mathfrak{so}$}\left(  n\right)  \rightarrow\mathbf{SO}\left(
n\right) \\
A  &  \rightarrow C\left(  A\right)  =\left(  I+A\right)  \left(  I-A\right)
^{-1}.
\end{align*}
$\ \ \ \ \ \ \ \ \ \ \ \ \ \ \ \ \ \ \ \ \ \ \ $

\qquad Note that, if a matrix $M$ has the eigenvalue $-1,$ then $\left(
I-M\right)  $ is not invertible. But, since $A$ is skew symmetric, all its
eigenvalues are purely imaginary in the Euclidean space and $\left(
I-A\right)  $ is invertible. That is, the Cayley formula is well-defined for
all skew symmetric matrices. The inverse of the Cayley map is given by
\[
C^{-1}=\left(  I-A\right)  \left(  I+A\right)  ^{-1}.
\]

\qquad In the Minkowski space, an eigenvalue of a skew symmetric matrix can be
$-1.$ In this case, the Cayley formula is not valid. For detailed information
on the Cayley formula in Minkowski space see \cite{melek2}.

\begin{theorem}
Let $T$ be a skew symmetric matrix in the form (\ref{3ters}) such that
$\mathbf{u}=\left(  u_{1},u_{2},u_{3}\right)  \in%
\mathbb{R}
_{a_{1},a_{2},a_{3}}^{3}$. Then,
\[
R^{\mathcal{B},\mathbf{u}}=\left(  I+T\right)  \left(  I-T\right)  ^{-1}%
\]
is an elliptical rotation matrix on the ellipsoid $a_{1}x^{2}+a_{2}y^{2}%
+a_{3}z^{2}=1$ where $\mathbf{u}$ is the rotation axis. Furthermore, the
matrix $R^{\mathcal{B},\mathbf{u}}$ can be written in the form
\begin{equation}
\tfrac{1}{1+\left\Vert \mathbf{u}\right\Vert _{\mathcal{B}}^{2}}\left[
\begin{array}
[c]{ccc}%
a_{1}u_{1}^{2}{\small -}u_{2}^{2}a_{2}{\small -}u_{3}^{2}a_{3}{\small +}1 &
a_{2}u_{1}u_{2}{\small -}\tfrac{2\Delta u_{3}}{a_{1}}{\small +}u_{1}u_{2}a_{2}
& \tfrac{2\Delta u_{2}}{a_{1}}{\small +}a_{3}u_{1}u_{3}{\small +}u_{1}%
u_{3}a_{3}\\
\tfrac{2\Delta u_{3}}{a_{2}}{\small +}a_{1}u_{1}u_{2}{\small +}u_{1}u_{2}a_{1}
& a_{2}u_{2}^{2}{\small -}u_{1}^{2}a_{1}{\small -}u_{3}^{2}a_{3}{\small +}1 &
a_{3}u_{2}u_{3}{\small -}\tfrac{2\Delta u_{1}}{a_{2}}{\small +}u_{2}u_{3}%
a_{3}\\
a_{1}u_{1}u_{3}{\small -}\tfrac{2\Delta u_{2}}{a_{3}}{\small +}u_{1}u_{3}a_{1}
& \tfrac{2\Delta u_{1}}{a_{3}}{\small +}a_{2}u_{2}u_{3}{\small +}u_{2}%
u_{3}a_{2} & a_{3}u_{3}^{2}{\small -}u_{1}^{2}a_{1}{\small -}u_{2}^{2}%
a_{2}{\small +}1
\end{array}
\right]  . \label{cayleyrot}%
\end{equation}

\end{theorem}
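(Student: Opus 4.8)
The plan is to verify directly that $R^{\mathcal{B},\mathbf{u}}=(I+T)(I-T)^{-1}$ lies in $\mathbf{SO}_{\mathcal{B}}(3)$, and then to compute the closed form by evaluating $(I-T)^{-1}$ explicitly. For the first part, since $\mathbf{u}$ need not be a unit vector here, I would not use $T^{3}=-T$ but rather the general relation $T^{t}\Omega=-\Omega T$ established for the skew-symmetric form (\ref{3ters}). First I would check that $(I-T)$ is invertible: the characteristic polynomial computed just before the theorem is $P(x)=x^{3}+\|\mathbf{u}\|_{\mathcal{B}}^{2}x$, so the eigenvalues of $T$ are $0$ and $\pm\|\mathbf{u}\|_{\mathcal{B}}i$; none equals $1$, hence $\det(I-T)\neq 0$ and the Cayley transform is defined for every such $T$ (this is the exact analogue of the Euclidean remark in the text). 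Then for the orthogonality relation I would start from $R^{t}\Omega R=\Omega$ and reduce it, using that $I+T$ and $(I-T)^{-1}$ commute (both are polynomials in $T$), to
\[
(I-T)^{-t}(I+T)^{t}\,\Omega\,(I+T)(I-T)^{-1}=\Omega .
\]
Transposing $T^{t}\Omega=-\Omega T$ gives $\Omega T^{t}=-T\Omega$ as well, so $(I+T)^{t}\Omega=\Omega(I-T)$ and $(I-T)^{t}\Omega=\Omega(I+T)$; substituting these two identities collapses the left-hand side to $\Omega$, which is the desired equality $R^{t}\Omega R=\Omega$.

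Next I would pin down $\det R^{\mathcal{B},\mathbf{u}}=+1$ rather than $-1$. Since $\det R=\det(I+T)/\det(I-T)$ and $\det(I-T)=\det\big((I-T)^{t}\big)$, the identity $(I-T)^{t}\Omega=\Omega(I+T)$ gives $\det(I-T)=\det(I+T)$, hence $\det R=1$; combined with the first part this places $R$ in $\mathbf{SO}_{\mathcal{B}}(3)$, i.e. it is a genuine elliptical rotation matrix on $a_{1}x^{2}+a_{2}y^{2}+a_{3}z^{2}=1$. To identify $\mathbf{u}$ as the rotation axis I would note $T\mathbf{u}=\mathcal{V}(\mathbf{u}\times\mathbf{u})=0$, so $(I+T)\mathbf{u}=\mathbf{u}=(I-T)\mathbf{u}$ and therefore $R^{\mathcal{B},\mathbf{u}}\mathbf{u}=\mathbf{u}$; thus $\mathbf{u}$ is the $\mathcal{B}$-orthogonal fixed direction, exactly as in Remark after the Rodrigues theorem.

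Finally, for the explicit matrix (\ref{cayleyrot}) I would compute $(I-T)^{-1}$ via the adjugate formula, $(I-T)^{-1}=\operatorname{adj}(I-T)/\det(I-T)$, where $\det(I-T)=1+\|\mathbf{u}\|_{\mathcal{B}}^{2}=1+a_{1}u_{1}^{2}+a_{2}u_{2}^{2}+a_{3}u_{3}^{2}$ by the characteristic polynomial (this explains the scalar prefactor $1/(1+\|\mathbf{u}\|_{\mathcal{B}}^{2})$). Then $R=(I+T)\operatorname{adj}(I-T)/\det(I-T)$ is a $3\times 3$ matrix multiplication whose entries, after collecting terms and using $\Delta^{2}=a_{1}a_{2}a_{3}$, match (\ref{cayleyrot}); the off-diagonal entries split into the "$2\Delta u_{k}/a_{i}$" pieces coming from the linear part of $T$ in the adjugate and the "$a_{i}u_{i}u_{j}+u_{i}u_{j}a_{j}$" pieces coming from the quadratic ($T^{2}$-type) cofactors. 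The only real obstacle is bookkeeping: keeping the $a_i$'s in the right rows when forming $\operatorname{adj}(I-T)$, since $T$ is skew-symmetric only relative to $\Omega$ and not in the ordinary sense, so the cofactor entries are not simply antisymmetric; carefully tracking which index sits in the denominator is what makes the final formula come out asymmetric as displayed. No step is deep — the substance is the two identities $(I\pm T)^{t}\Omega=\Omega(I\mp T)$, which do all the work.
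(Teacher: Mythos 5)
Your proposal is correct and follows essentially the same route as the paper: the whole argument rests on the two identities $(I\pm T)^{t}\Omega=\Omega(I\mp T)$ derived from $T^{t}\Omega=-\Omega T$, with the explicit form (\ref{cayleyrot}) left to direct computation of $(I+T)(I-T)^{-1}$ with $\det(I-T)=1+\left\Vert \mathbf{u}\right\Vert _{\mathcal{B}}^{2}$. Your version is in fact slightly tighter than the paper's in two places — you justify invertibility of $I-T$ from the eigenvalues, obtain $\det R=1$ from $\det(I-T)=\det(I+T)$ rather than by evaluating each determinant, and verify the axis via $T\mathbf{u}=0$ — but these are refinements of the same argument, not a different one.
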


\begin{proof}
Since, $T$ is a $\mathcal{B}$-skew symmetric matrix, we have $T^{t}%
\Omega=-\Omega T.$ Also, we can write
\[
\left(  I+T\right)  ^{t}\Omega=\Omega\left(  I-T\right)  \text{ and }\left(
I-T\right)  ^{t}\Omega=\Omega\left(  I+T\right)  .
\]

Using these equalities, it can be seen that
\[
\left(  R^{\mathcal{B},\mathbf{u}}\right)  ^{t}\Omega\left(  R^{\mathcal{B}%
,\mathbf{u}}\right)  =\left(  \left(  I+T\right)  \left(  I-T\right)
^{-1}\right)  ^{t}\Omega\left(  I+T\right)  \left(  I-T\right)  ^{-1}=\Omega.
\]
Also, since $\det\left(  I+T\right)  =1+\left\Vert \mathbf{u}\right\Vert
_{\mathcal{B}}$ and $\det\left(  I-T\right)  ^{-1}=\dfrac{1}{1+\left\Vert
\mathbf{u}\right\Vert _{\mathcal{B}}},$ we have $\det R^{\mathcal{B}%
,\mathbf{u}}=1.$ That is, $R^{\mathcal{B},\mathbf{u}}$ is an elliptical
rotation matrix. The matrix (\ref{cayleyrot}) can be obtained after some
tedious computations.
\end{proof}

\setcounter{theorem}{2}

\begin{remark}
The eigenvalues of the matrix (\ref{cayleyrot}) are
\[
\lambda_{1}=\dfrac{1-\left\Vert \mathbf{u}\right\Vert _{\mathcal{B}}^{2}%
}{1+\left\Vert \mathbf{u}\right\Vert _{\mathcal{B}}^{2}}+\dfrac{2\left\Vert
\mathbf{u}\right\Vert _{\mathcal{B}}i}{1+\left\Vert \mathbf{u}\right\Vert
_{\mathcal{B}}^{2}},\text{ \ \ \ }\lambda_{2}=\dfrac{1-\left\Vert
\mathbf{u}\right\Vert _{\mathcal{B}}^{2}}{1+\left\Vert \mathbf{u}\right\Vert
_{\mathcal{B}}^{2}}-\dfrac{2\left\Vert \mathbf{u}\right\Vert _{\mathcal{B}}%
i}{1+\left\Vert \mathbf{u}\right\Vert _{\mathcal{B}}^{2}}\text{ \ \ and
\ \ }\lambda_{3}=1.
\]
Also, the eigenvector corresponding to $1$ is $\mathbf{u,}$ which is\ the
rotation axis. The matrix (\ref{cayleyrot}) rotates a vector elliptically on
the ellipsoid $a_{1}x^{2}+a_{2}y^{2}+a_{3}z^{2}=1$ about the axis
$\mathbf{u}=\left(  u_{1},u_{2},u_{3}\right)  $ through the elliptical angle
$\theta$ where
\begin{equation}
\tan\theta=\dfrac{2\left\Vert \mathbf{u}\right\Vert _{\mathcal{B}}%
}{1-\left\Vert \mathbf{u}\right\Vert _{\mathcal{B}}^{2}}. \label{tantetat}%
\end{equation}

\end{remark}

\setcounter{theorem}{2}

\begin{example}
Let's find the elliptical rotation matrix representing a elliptical rotation
on the ellipsoid $\dfrac{x^{2}}{4}+\dfrac{y^{2}}{9}+z^{2}=1$ about the axis
$\mathbf{u}=\left(  2,3,1\right)  .$ Using the matrix (\ref{cayleyrot}), we
find
\[
\left[
\begin{array}
[c]{ccc}%
0 & 0 & 2\\
3/2 & 0 & 0\\
0 & 1/3 & 0
\end{array}
\right]  .
\]
The elliptical rotation angle corresponding to this matrix, can be found\ to
be $-\pi/3$ from the formula (\ref{tantetat}).
\end{example}

\subsection{Elliptical Householder Transformation}

\qquad The Householder transformation was introduced in 1958 by Alston Scott
Householder. A Householder transformation\ is a linear transformation in the form

\begin{center}
$\mathcal{H}_{\mathbf{v}}\left(  x\right)  =x-\dfrac{2\mathbf{vv}^{t}%
}{\mathbf{v}^{t}\mathbf{v}}x$
\end{center}

where $\mathbf{v}$ is a nonzero vector. This transformation describes a
reflection about a plane or hyperplane passing through the origin and
orthogonal to $\mathbf{v.}$ Householder transformations on spaces with a
non-degenerate bilinear or sesquilinear forms are studied in \cite{Macykey1}.
Every orthogonal transformation is the combination of reflections with respect
to hyperplanes. This is known as the Cartan--Dieudonn\'{e} theorem. A
constructive proof of the Cartan--Dieudonn\'{e} theorem for the case of
generalized scalar product spaces is given by Uhlig \cite{Uhlig} and Fuller
\cite{Fuller}. An alternative proof of the Cartan--Dieudonn\'{e} theorem for
generalized real scalar product spaces\ is given by Rodr\'{\i}guez-Andrade and
etc. \cite{Aragon1}. They used the Clifford algebras to compute the
factorization of a given orthogonal transformation as a product of reflections.

\qquad Householder transformations are widely used in tridiagonalization of
symmetric matrices and to perform QR decompositions in numerical linear
algebra \cite{J. Gallier1}.\ Householder Transformation is cited in the top 10
algorithms of the 20th century \cite{yuzyil}. Generalized Householder matrices
are the simplest generalized orthogonal matrices. A generalized Householder,
or $\mathcal{B}$-Householder, matrix has the form

\begin{center}
$\mathcal{H}_{\mathbf{v}}\left(  x\right)  =x-\dfrac{2\mathbf{vv}^{t}\Omega
}{\mathbf{v}^{t}\Omega\mathbf{v}}x$
\end{center}

for a non-B-isotropic vector $\mathbf{v}$ (i.e., $\mathbf{v}^{t}%
\Omega\mathbf{v}\neq0$) \cite{Fuller}, \cite{Aragon1}.

\qquad In this paper, we use the elliptical version of the Householder
transformation to generate an elliptical rotation matrix in a real elliptical
scalar product space $\mathbb{%
\mathbb{R}
}_{a_{1},a_{2},...,a_{n}}^{n}.$ Let $\mathcal{H}_{\mathbf{v}}=\left[
h_{ij}\right]  _{n\times n}$ be a Householder matrix. Then

\begin{center}
$h_{ij}=\delta_{ij}-\dfrac{2v_{i}v_{j}\Omega_{jj}}{\mathbf{v}^{t}%
\Omega\mathbf{v}}$
\end{center}

where $\delta_{ij}$ is the Kronecker delta.\textbf{ }The Householder matrix
$\mathcal{H}_{\mathbf{v}}$\ is a $\mathcal{B}$-symmetric and $\mathcal{B}%
$-orthogonal matrix. To prove these, first we will show that $\mathcal{H}%
_{\mathbf{v}}^{t}\Omega=\Omega\mathcal{H}_{\mathbf{v}}$ : $\mathbf{v}%
^{t}\Omega\mathbf{v}=\mathcal{B}\left(  \mathbf{v},\mathbf{v}\right)
=\left\Vert \mathbf{v}\right\Vert _{\mathcal{B}}^{2}$ is a positive real
number$.$ So,

\begin{center}
$\mathcal{H}_{\mathbf{v}}^{t}\Omega=\left(  I-\dfrac{2\mathbf{vv}^{t}\Omega
}{\mathbf{v}^{t}\Omega\mathbf{v}}\right)  ^{t}\Omega=\left(  \Omega
-\dfrac{2\Omega\mathbf{vv}^{t}\Omega}{\mathbf{v}^{t}\Omega\mathbf{v}}\right)
=\Omega\left(  I-\dfrac{2\mathbf{vv}^{t}\Omega}{\mathbf{v}^{t}\Omega
\mathbf{v}}\right)  =\Omega\mathcal{H}_{\mathbf{v}},$
\end{center}

and $\mathcal{H}_{\mathbf{v}}$ is a $\mathbf{\mathcal{B}}$-symmetric. Next, we
will show that $\mathcal{H}_{\mathbf{v}}^{t}\Omega\mathcal{H}_{\mathbf{v}%
}=\Omega.$
\begin{align*}
\mathcal{H}_{\mathbf{v}}^{t}\Omega\mathcal{H}_{\mathbf{v}}  &  =\Omega
\mathcal{H}_{\mathbf{v}}^{2}\text{ \ (}\mathcal{H}_{\mathbf{v}}\text{ is
$B$-symmetric)}\\
&  =\Omega\left(  I-\dfrac{2\mathbf{vv}^{t}\Omega}{\mathbf{v}^{t}%
\Omega\mathbf{v}}\right)  \left(  I-\dfrac{2\mathbf{vv}^{t}\Omega}%
{\mathbf{v}^{t}\Omega\mathbf{v}}\right) \\
&  =\Omega\left(  I-\dfrac{2\mathbf{vv}^{t}\Omega}{\mathbf{v}^{t}%
\Omega\mathbf{v}}-\dfrac{2\mathbf{vv}^{t}\Omega}{\mathbf{v}^{t}\Omega
\mathbf{v}}+\dfrac{4\left(  \mathbf{vv}^{t}\Omega\right)  \left(
\mathbf{vv}^{t}\Omega\right)  }{\left(  \mathbf{v}^{t}\Omega\mathbf{v}\right)
^{2}}\right) \\
&  =\Omega.
\end{align*}
So, $\mathcal{H}_{\mathbf{v}}$\ is $\mathcal{B}$-orthogonal.

\bigskip

\qquad Using the above transformations, we can write the elliptical reflection
matrix as%
\begin{equation}
\mathcal{H}_{\mathbf{v}}=\dfrac{1}{\left\Vert \mathbf{v}\right\Vert
_{\mathcal{B}}^{2}}\left[
\begin{array}
[c]{ccc}%
\left\Vert \mathbf{v}\right\Vert _{\mathcal{B}}^{2}-2a_{1}v_{1}^{2} &
-2a_{2}v_{1}v_{2} & -2a_{3}v_{1}v_{3}\\
-2a_{1}v_{2}v_{1} & \left\Vert \mathbf{v}\right\Vert _{\mathcal{B}}^{2}%
-2a_{2}v_{2}^{2} & -2a_{3}v_{2}v_{3}\\
-2a_{1}v_{3}v_{1} & -2a_{2}v_{3}v_{2} & \left\Vert \mathbf{v}\right\Vert
_{\mathcal{B}}^{2}-2a_{3}v_{3}^{2}%
\end{array}
\right]  \label{house1}%
\end{equation}
for the ellipsoid $a_{1}x^{2}+a_{2}y^{2}+a_{3}z^{2}=1$.

\setcounter{theorem}{0}

\begin{corollary}
The Householder transformation doesn't change the length of a vector in $%
\mathbb{R}
_{a_{1},a_{2},a_{3}}^{n}$. It describes an elliptical reflection about a plane
passing through the origin $\mathbf{\mathcal{B}}$-orthogonal to $\mathbf{v.}$
Using the matrix (\ref{house1}), we can express elliptical reflections on any ellipsoid.
\end{corollary}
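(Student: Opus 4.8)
The plan is to read everything off from the identities established immediately before the statement. First, for the length claim: since it was just shown that $\mathcal{H}_{\mathbf{v}}^{t}\Omega\mathcal{H}_{\mathbf{v}}=\Omega$, for every $\mathbf{x}\in\mathbb{R}^{n}$ we get $\left\Vert \mathcal{H}_{\mathbf{v}}\mathbf{x}\right\Vert_{\mathcal{B}}^{2}=(\mathcal{H}_{\mathbf{v}}\mathbf{x})^{t}\Omega(\mathcal{H}_{\mathbf{v}}\mathbf{x})=\mathbf{x}^{t}\mathcal{H}_{\mathbf{v}}^{t}\Omega\mathcal{H}_{\mathbf{v}}\mathbf{x}=\mathbf{x}^{t}\Omega\mathbf{x}=\left\Vert \mathbf{x}\right\Vert_{\mathcal{B}}^{2}$, so $\mathcal{H}_{\mathbf{v}}$ preserves the $\mathcal{B}$-norm. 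I would also note that positive definiteness of $\mathcal{B}$ forces $\mathbf{v}^{t}\Omega\mathbf{v}=\left\Vert \mathbf{v}\right\Vert_{\mathcal{B}}^{2}>0$ for every $\mathbf{v}\neq 0$, so no vector is $\mathcal{B}$-isotropic and $\mathcal{H}_{\mathbf{v}}$ is well-defined for every nonzero $\mathbf{v}$.

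Next, to identify $\mathcal{H}_{\mathbf{v}}$ as the elliptical reflection in the hyperplane through the origin $\mathcal{B}$-orthogonal to $\mathbf{v}$, I would split an arbitrary $\mathbf{x}$ as $\mathbf{x}=\mathbf{x}_{\parallel}+\mathbf{x}_{\perp}$, where $\mathbf{x}_{\parallel}=\dfrac{\mathcal{B}(\mathbf{x},\mathbf{v})}{\mathcal{B}(\mathbf{v},\mathbf{v})}\mathbf{v}$ is the $\mathcal{B}$-orthogonal projection of $\mathbf{x}$ onto $\mathrm{span}\{\mathbf{v}\}$ and $\mathbf{x}_{\perp}=\mathbf{x}-\mathbf{x}_{\parallel}$ satisfies $\mathcal{B}(\mathbf{x}_{\perp},\mathbf{v})=0$; here one uses that $\Omega$ is symmetric, so $\mathbf{v}^{t}\Omega\mathbf{x}=\mathcal{B}(\mathbf{x},\mathbf{v})$. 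Substituting into the definition gives $\mathcal{H}_{\mathbf{v}}\mathbf{x}=\mathbf{x}-\dfrac{2(\mathbf{v}^{t}\Omega\mathbf{x})}{\mathbf{v}^{t}\Omega\mathbf{v}}\mathbf{v}=\mathbf{x}-2\mathbf{x}_{\parallel}=\mathbf{x}_{\perp}-\mathbf{x}_{\parallel}$, which fixes the hyperplane $\{\mathbf{x}:\mathcal{B}(\mathbf{x},\mathbf{v})=0\}$ pointwise and reverses the $\mathbf{v}$-direction; hence it is exactly a $\mathcal{B}$-reflection about that hyperplane. Counting eigenvalues ($+1$ with multiplicity $n-1$ on the hyperplane and $-1$ on $\mathbf{v}$) yields $\det\mathcal{H}_{\mathbf{v}}=-1$, confirming that $\mathcal{H}_{\mathbf{v}}$ is an elliptical reflection and not a rotation.

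For the explicit $3\times 3$ form (\ref{house1}), I would simply substitute $\Omega=\mathrm{diag}(a_{1},a_{2},a_{3})$ into $h_{ij}=\delta_{ij}-\dfrac{2v_{i}v_{j}\Omega_{jj}}{\mathbf{v}^{t}\Omega\mathbf{v}}$, using $\mathbf{v}^{t}\Omega\mathbf{v}=a_{1}v_{1}^{2}+a_{2}v_{2}^{2}+a_{3}v_{3}^{2}=\left\Vert \mathbf{v}\right\Vert_{\mathcal{B}}^{2}$; this reproduces (\ref{house1}) entry by entry. Finally, since $\mathcal{H}_{\mathbf{v}}$ preserves $\left\Vert \cdot\right\Vert_{\mathcal{B}}$, it maps the level set $\left\Vert \mathbf{x}\right\Vert_{\mathcal{B}}=1$ — that is, the ellipsoid $a_{1}x^{2}+a_{2}y^{2}+a_{3}z^{2}=1$ — onto itself, so (\ref{house1}) genuinely realizes elliptical reflections on that ellipsoid (and, after rescaling, on any similar one). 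I do not expect a real obstacle here: the only step requiring a little care is verifying that $\mathbf{x}_{\parallel}$ is the correct $\mathcal{B}$-orthogonal projection and that $\mathbf{x}_{\perp}$ is left fixed — both of which rest solely on the symmetry of $\Omega$ — together with the routine bookkeeping in expanding (\ref{house1}).
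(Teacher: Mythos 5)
Your proposal is correct and follows essentially the same route as the paper, which justifies the corollary by the immediately preceding computations showing $\mathcal{H}_{\mathbf{v}}^{t}\Omega\mathcal{H}_{\mathbf{v}}=\Omega$ and by substituting $\Omega=\mathrm{diag}(a_{1},a_{2},a_{3})$ to obtain (\ref{house1}). Your orthogonal-decomposition argument $\mathbf{x}=\mathbf{x}_{\parallel}+\mathbf{x}_{\perp}$ identifying the map as the reflection fixing the hyperplane $\mathcal{B}(\mathbf{x},\mathbf{v})=0$ and sending $\mathbf{v}\mapsto-\mathbf{v}$ (hence $\det\mathcal{H}_{\mathbf{v}}=-1$) is a welcome detail that the paper asserts without proof.
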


\setcounter{theorem}{3}

\begin{example}
Let's find the elliptical reflection matrix which reflects a point
elliptically on the ellipsoid $2x^{2}+2y^{2}+z^{2}=1,$ about the plane which
is passes through the origin and is $\mathcal{B}$-orthogonal to $\mathbf{v}%
=\left(  1,2,3\right)  .$ Our elliptical inner product is $\mathcal{B}\left(
x,y\right)  =2x_{1}y_{2}+2x_{2}y_{2}+x_{3}y_{3}$. So, $\left\Vert
\mathbf{v}\right\Vert _{\mathcal{B}}^{2}=2\cdot1^{2}+2\cdot2^{2}+1\cdot
3^{2}=\allowbreak19$ and we get
\[
\mathcal{H}_{v}=\left[
\begin{array}
[c]{ccc}%
15/19 & -8/19 & -6/19\\
-8/19 & 3/19 & -12/19\\
-12/19 & -24/19 & 1/19
\end{array}
\right]  .
\]
The determinant of this matrix is $-1$ and the equality $\mathcal{H}%
_{\mathbf{v}}^{t}\Omega\mathcal{H}_{\mathbf{v}}=\Omega$ holds. We find the
reflection of $A\left(  1/2,1/2,0\right)  $ to be $B(7/38,-5/38,-18/19).$
Let's check it : The middle point of $\left[  AB\right]  $ is $C\left(
13/38,7/38,-9/19\right)  .$ The equation of the plane is $\mathcal{B}\left(
\mathbf{v},\mathbf{x}\right)  =0$ where $\mathbf{x}=\left(  x,y,z\right)  .$
In our case, it is $2x+4y+3z=0,$ so $C$ lies on this plane. Also, it can be
easily seen that $\overrightarrow{AB}$ is $\mathcal{B}$-orthogonal to the plane.
\end{example}

\setcounter{theorem}{3}

\begin{remark}
For the $n$ dimensional ellipsoid associated with the scalar product
$\mathcal{B}$, the elliptical reflection matrix $\mathcal{H}_{\mathbf{v}}$ is
in the form
\[
\mathcal{H}_{\mathbf{v}}^{\mathcal{B}}=\dfrac{1}{\left\Vert \mathbf{v}%
\right\Vert _{\mathcal{B}}^{2}}\left[
\begin{array}
[c]{ccccc}%
\left\Vert \mathbf{v}\right\Vert _{\mathcal{B}}^{2}-2a_{1}v_{1}^{2} &
-2a_{2}v_{1}v_{2} & -2a_{3}v_{1}v_{3} & \cdots & -2a_{n}v_{1}v_{n}\\
-2a_{1}v_{2}v_{1} & \left\Vert \mathbf{v}\right\Vert _{\mathcal{B}}^{2}%
-2a_{2}v_{2}^{2} & -2a_{3}v_{2}v_{3} & \cdots & -2a_{n}v_{2}v_{n}\\
-2a_{1}v_{3}v_{1} & -2a_{2}v_{3}v_{2} & \left\Vert \mathbf{v}\right\Vert
_{\mathcal{B}}^{2}-2a_{3}v_{3}^{2} & \cdots & -2a_{n}v_{3}v_{n}\\
\vdots & \vdots & \vdots & \ddots & \vdots\\
-2a_{1}v_{n}v_{1} & -2a_{2}v_{n}v_{2} & -2v_{n}a_{3}v_{3} & \cdots &
\left\Vert \mathbf{v}\right\Vert _{\mathcal{B}}^{2}-2a_{n}v_{n}^{2}%
\end{array}
\right]
\]
where $\mathbf{v}^{t}=\left(  v_{1},v_{2},...,v_{n}\right)  .$
\end{remark}

\begin{theorem}
Let $\boldsymbol{x,y}\in%
\mathbb{R}
_{\mathcal{B}}^{n}$ be any two nonzero vectors where $\left\Vert
\boldsymbol{x}\right\Vert _{\mathcal{B}}=\left\Vert \boldsymbol{y}\right\Vert
_{\mathcal{B}}$. If $\mathbf{v}=\boldsymbol{x}-\boldsymbol{y}$, then%
\[
\mathcal{H}_{\mathbf{v}}^{\mathcal{B}}\left(  \mathbf{x}\right)
=\boldsymbol{x}-\dfrac{2\mathbf{vv}^{t}\Omega}{\mathbf{v}^{t}\Omega\mathbf{v}%
}\boldsymbol{x}=\boldsymbol{y}.
\]

\end{theorem}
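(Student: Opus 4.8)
The plan is to verify directly that $\mathcal{H}_{\mathbf{v}}^{\mathcal{B}}(\mathbf{x}) = \mathbf{y}$ by computing the scalar coefficient $\frac{2\,\mathbf{v}^{t}\Omega\mathbf{x}}{\mathbf{v}^{t}\Omega\mathbf{v}}$ and showing it equals $1$, so that $\mathbf{x} - \frac{2\mathbf{v}\mathbf{v}^{t}\Omega}{\mathbf{v}^{t}\Omega\mathbf{v}}\mathbf{x} = \mathbf{x} - \mathbf{v} = \mathbf{x} - (\mathbf{x}-\mathbf{y}) = \mathbf{y}$. The key observation is that $\mathbf{v}^{t}\Omega\mathbf{x}$ and $\mathbf{v}^{t}\Omega\mathbf{v}$ are both expressible through the $\mathcal{B}$-inner product: $\mathbf{v}^{t}\Omega\mathbf{x} = \mathcal{B}(\mathbf{v},\mathbf{x})$ and $\mathbf{v}^{t}\Omega\mathbf{v} = \mathcal{B}(\mathbf{v},\mathbf{v}) = \|\mathbf{v}\|_{\mathcal{B}}^{2}$.

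First I would expand, using bilinearity of $\mathcal{B}$ and $\mathbf{v} = \mathbf{x} - \mathbf{y}$,
\[
\mathcal{B}(\mathbf{v},\mathbf{x}) = \mathcal{B}(\mathbf{x},\mathbf{x}) - \mathcal{B}(\mathbf{y},\mathbf{x}) = \|\mathbf{x}\|_{\mathcal{B}}^{2} - \mathcal{B}(\mathbf{x},\mathbf{y}),
\]
where symmetry of $\mathcal{B}$ gives $\mathcal{B}(\mathbf{y},\mathbf{x}) = \mathcal{B}(\mathbf{x},\mathbf{y})$. Next I would expand the denominator similarly:
\[
\|\mathbf{v}\|_{\mathcal{B}}^{2} = \mathcal{B}(\mathbf{x}-\mathbf{y},\mathbf{x}-\mathbf{y}) = \|\mathbf{x}\|_{\mathcal{B}}^{2} - 2\mathcal{B}(\mathbf{x},\mathbf{y}) + \|\mathbf{y}\|_{\mathcal{B}}^{2}.
\]
Now I invoke the hypothesis $\|\mathbf{x}\|_{\mathcal{B}} = \|\mathbf{y}\|_{\mathcal{B}}$, so $\|\mathbf{y}\|_{\mathcal{B}}^{2} = \|\mathbf{x}\|_{\mathcal{B}}^{2}$, and the denominator becomes $2\|\mathbf{x}\|_{\mathcal{B}}^{2} - 2\mathcal{B}(\mathbf{x},\mathbf{y}) = 2\big(\|\mathbf{x}\|_{\mathcal{B}}^{2} - \mathcal{B}(\mathbf{x},\mathbf{y})\big) = 2\,\mathcal{B}(\mathbf{v},\mathbf{x})$. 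Hence $\frac{2\,\mathcal{B}(\mathbf{v},\mathbf{x})}{\|\mathbf{v}\|_{\mathcal{B}}^{2}} = 1$, provided $\|\mathbf{v}\|_{\mathcal{B}}^{2} \neq 0$, which holds since $\mathcal{B}$ is positive definite and $\mathbf{v} = \mathbf{x}-\mathbf{y}$ is nonzero (the case $\mathbf{x}=\mathbf{y}$ being trivial or excluded).

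There is no real obstacle here; the only subtlety worth a sentence is the well-definedness of $\mathcal{H}_{\mathbf{v}}^{\mathcal{B}}$, i.e. that $\mathbf{v}$ is non-$\mathcal{B}$-isotropic — but in the positive definite elliptical space this is automatic for any nonzero $\mathbf{v}$, so I would note the assumption $\mathbf{x}\neq\mathbf{y}$ and conclude. I would then close by remarking that this lemma is the standard ingredient for building an elliptical rotation matrix as a product of two such reflections, mirroring the Euclidean Householder construction.
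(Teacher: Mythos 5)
Your proof is correct and follows essentially the same route as the paper's: both expand $\mathbf{v}^{t}\Omega\mathbf{v}$ and $\mathbf{v}^{t}\Omega\boldsymbol{x}$ by bilinearity, use $\left\Vert \boldsymbol{x}\right\Vert _{\mathcal{B}}=\left\Vert \boldsymbol{y}\right\Vert _{\mathcal{B}}$ to see that the fraction reduces to $\mathbf{v}$, and conclude $\boldsymbol{x}-\mathbf{v}=\boldsymbol{y}$. Your added remark that $\mathbf{v}$ is automatically non-isotropic in the positive definite case (so the transformation is well defined when $\boldsymbol{x}\neq\boldsymbol{y}$) is a small point the paper leaves implicit.
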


\begin{proof}
By direct computation, we find%
\[
\mathbf{v}^{t}\Omega\mathbf{v}=\mathcal{B}\left(  \boldsymbol{x}%
-\boldsymbol{y,x}-\boldsymbol{y}\right)  =2\left\Vert \boldsymbol{x}%
\right\Vert _{\mathcal{B}}^{2}-2\boldsymbol{y}^{t}\Omega\boldsymbol{x}%
\]
and%
\[
2\mathbf{vv}^{t}\Omega\boldsymbol{x}=2\left(  \boldsymbol{x}-\boldsymbol{y}%
\right)  \left(  \boldsymbol{x}-\boldsymbol{y}\right)  ^{t}\Omega
\boldsymbol{x}=\left(  \boldsymbol{x}-\boldsymbol{y}\right)  \left(
2\left\Vert \boldsymbol{x}\right\Vert _{\mathcal{B}}^{2}-2\boldsymbol{y}%
^{t}\Omega\boldsymbol{x}\right)  .
\]
Using these equalities, we obtain%
\[
\mathcal{H}_{\mathbf{v}}^{\mathcal{B}}\left(  \mathbf{x}\right)
=\boldsymbol{x}-\dfrac{2\mathbf{vv}^{t}\Omega}{\mathbf{v}^{t}\Omega\mathbf{v}%
}\boldsymbol{x=x-}\dfrac{\left(  \boldsymbol{x}-\boldsymbol{y}\right)  \left(
2\left\Vert \boldsymbol{x}\right\Vert _{\mathcal{B}}^{2}-2\boldsymbol{y}%
^{t}\Omega\boldsymbol{x}\right)  }{2\left\Vert \boldsymbol{x}\right\Vert
_{\mathcal{B}}^{2}-2\boldsymbol{y}^{t}\Omega\boldsymbol{x}}=\boldsymbol{x-x+y}%
=\boldsymbol{y.}%
\]

\end{proof}

\bigskip

\begin{theorem}
Let $\boldsymbol{x,y}\in%
\mathbb{R}
_{\mathcal{B}}^{n}$ be any two nonzero vectors where $\left\Vert
\boldsymbol{x}\right\Vert _{\mathcal{B}}=\left\Vert \boldsymbol{y}\right\Vert
_{\mathcal{B}}$. If $\mathbf{v}=\boldsymbol{x}+\boldsymbol{y}$, then%
\[
\mathcal{H}_{\mathbf{v}}^{\mathcal{B}}\left(  \boldsymbol{x}\right)
=-\boldsymbol{y}.
\]

\end{theorem}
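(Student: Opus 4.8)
The plan is to mirror the proof of the preceding theorem, replacing $\boldsymbol{x}-\boldsymbol{y}$ by $\mathbf{v}=\boldsymbol{x}+\boldsymbol{y}$ throughout and tracking signs. First I would record the two quantities that feed the Householder formula. Using bilinearity, the symmetry of $\Omega$, and the hypothesis $\left\Vert \boldsymbol{x}\right\Vert _{\mathcal{B}}=\left\Vert \boldsymbol{y}\right\Vert _{\mathcal{B}}$,
\[
\mathbf{v}^{t}\Omega\mathbf{v}=\mathcal{B}\left(  \boldsymbol{x}+\boldsymbol{y},\boldsymbol{x}+\boldsymbol{y}\right)  =2\left\Vert \boldsymbol{x}\right\Vert _{\mathcal{B}}^{2}+2\boldsymbol{y}^{t}\Omega\boldsymbol{x},
\]
and, pulling the common scalar out of the rank-one term,
\[
2\mathbf{vv}^{t}\Omega\boldsymbol{x}=2\left(  \boldsymbol{x}+\boldsymbol{y}\right)  \left(  \boldsymbol{x}+\boldsymbol{y}\right)  ^{t}\Omega\boldsymbol{x}=\left(  \boldsymbol{x}+\boldsymbol{y}\right)  \left(  2\left\Vert \boldsymbol{x}\right\Vert _{\mathcal{B}}^{2}+2\boldsymbol{y}^{t}\Omega\boldsymbol{x}\right)  .
\]
Substituting these into $\mathcal{H}_{\mathbf{v}}^{\mathcal{B}}(\boldsymbol{x})=\boldsymbol{x}-\dfrac{2\mathbf{vv}^{t}\Omega}{\mathbf{v}^{t}\Omega\mathbf{v}}\boldsymbol{x}$, the scalar factor $2\left\Vert \boldsymbol{x}\right\Vert _{\mathcal{B}}^{2}+2\boldsymbol{y}^{t}\Omega\boldsymbol{x}$ cancels, leaving $\boldsymbol{x}-\left(  \boldsymbol{x}+\boldsymbol{y}\right)  =-\boldsymbol{y}$, which is the assertion.

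The only delicate point — and the step I would flag — is that $\mathcal{H}_{\mathbf{v}}^{\mathcal{B}}$ is defined only when $\mathbf{v}$ is non-$\mathcal{B}$-isotropic, i.e. $\mathbf{v}^{t}\Omega\mathbf{v}\neq0$, so that the cancellation above is legitimate. Since the elliptical inner product is positive definite ($a_{1},\dots,a_{n}\in\mathbb{R}^{+}$), we have $\mathbf{v}^{t}\Omega\mathbf{v}=\left\Vert \boldsymbol{x}+\boldsymbol{y}\right\Vert _{\mathcal{B}}^{2}$, which vanishes exactly when $\boldsymbol{x}=-\boldsymbol{y}$. Hence I would carry the standing assumption $\boldsymbol{x}\neq-\boldsymbol{y}$ (equivalently $\mathbf{v}\neq\mathbf{0}$); in the excluded case the stated identity is vacuous because the transformation does not exist, exactly as in the companion theorem with $\mathbf{v}=\boldsymbol{x}-\boldsymbol{y}$ (where the hidden hypothesis is $\boldsymbol{x}\neq\boldsymbol{y}$).

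Finally I would add the geometric observation tying the two theorems together: the $\mathcal{B}$-Householder reflection in $\boldsymbol{x}+\boldsymbol{y}$ sends $\boldsymbol{x}\mapsto-\boldsymbol{y}$ while fixing the $\mathcal{B}$-orthogonal complement of $\mathbf{v}$, and composing it with the reflection in $\boldsymbol{x}-\boldsymbol{y}$ (which sends $\boldsymbol{x}\mapsto\boldsymbol{y}$) produces a $\mathcal{B}$-orthogonal map of determinant $1$, i.e. an elliptical rotation carrying $\boldsymbol{x}$ to $\boldsymbol{y}$ on the ellipsoid associated with $\mathcal{B}$. This is the elliptical instance of the Cartan–Dieudonné factorization recalled above, and it is the reason both lemmas are stated.
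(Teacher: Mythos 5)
Your proof is correct and follows essentially the same route as the paper's: compute $\mathbf{v}^{t}\Omega\mathbf{v}=2\left\Vert \boldsymbol{x}\right\Vert _{\mathcal{B}}^{2}+2\boldsymbol{y}^{t}\Omega\boldsymbol{x}$ and $2\mathbf{vv}^{t}\Omega\boldsymbol{x}=\mathbf{v}\left(2\left\Vert \boldsymbol{x}\right\Vert _{\mathcal{B}}^{2}+2\boldsymbol{y}^{t}\Omega\boldsymbol{x}\right)$, cancel, and conclude $\boldsymbol{x}-(\boldsymbol{x}+\boldsymbol{y})=-\boldsymbol{y}$; the paper even includes your closing observation about composing with $\mathcal{H}_{\boldsymbol{y}}^{\mathcal{B}}$ to obtain the rotation. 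Your explicit flag that $\mathbf{v}$ must be non-$\mathcal{B}$-isotropic (i.e.\ $\boldsymbol{x}\neq-\boldsymbol{y}$) is a point the paper leaves implicit, and is worth keeping.
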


\begin{proof}
By direct computation, we see that%
\[
\mathbf{v}^{t}\Omega\mathbf{v}=\mathcal{B}\left(  \boldsymbol{x}%
+\boldsymbol{y,x}+\boldsymbol{y}\right)  =2\left\Vert \boldsymbol{x}%
\right\Vert _{\mathcal{B}}^{2}+2\boldsymbol{y}^{t}\Omega\boldsymbol{x}%
\]
and%
\[
2\mathbf{vv}^{t}\Omega\boldsymbol{x}=2\left(  \boldsymbol{x}+\boldsymbol{y}%
\right)  \left(  \boldsymbol{x}+\boldsymbol{y}\right)  ^{t}\Omega
\boldsymbol{x}=\left(  \boldsymbol{x}+\boldsymbol{y}\right)  \left(
2\left\Vert \boldsymbol{x}\right\Vert _{\mathcal{B}}^{2}+2\boldsymbol{y}%
^{t}\Omega\boldsymbol{x}\right)  .
\]
Thus, we get%
\[
\mathcal{H}_{\mathbf{v}}^{\mathcal{B}}\left(  \boldsymbol{x}\right)
\boldsymbol{=x-}\left(  \boldsymbol{x+y}\right)  \boldsymbol{=-y}.
\]
Moreover, we have $\mathcal{H}_{\boldsymbol{y}}^{\mathcal{B}}\left(
-\boldsymbol{y}\right)  =\boldsymbol{y}.$%
\[
\mathcal{H}_{\mathbf{y}}^{\mathcal{B}}\left(  \boldsymbol{-y}\right)
=-\boldsymbol{y}+\dfrac{2\boldsymbol{yy}^{t}\Omega}{\boldsymbol{y}^{t}%
\Omega\boldsymbol{y}}\boldsymbol{y=-y+\dfrac{2\boldsymbol{y}\left(
\boldsymbol{y}^{t}\Omega\boldsymbol{y}\right)  }{\boldsymbol{y}^{t}%
\Omega\boldsymbol{y}}=y.}%
\]
which means that $\mathcal{H}_{\boldsymbol{y}}^{\mathcal{B}}\mathcal{H}%
_{\boldsymbol{x+y}}^{\mathcal{B}}\left(  \boldsymbol{x}\right)
=\boldsymbol{y}.$ Hence, we obtain the elliptical rotation matrix%
\[
R_{\mathbf{\mathcal{B}}}=\mathcal{H}_{\boldsymbol{y}}^{\mathcal{B}}%
\mathcal{H}_{\boldsymbol{x+y}}^{\mathcal{B}}%
\]
which rotates $\mathbf{x}$ to $\mathbf{y}$ elliptically. The rotation axis is
the vector $\mathcal{V}\left(  \mathbf{x}\times\mathbf{y}\right)  $ and\ the
elliptic rotation angle can be found using elliptical inner product.
\end{proof}

\setcounter{theorem}{5}

\begin{example}
Let's find the elliptic rotation matrix that rotates $\boldsymbol{x}=\left(
0,0,5\right)  $ to $\boldsymbol{y}=\left(  2,2,3\right)  $ on the ellipsoid
$2x^{2}+2y^{2}+z^{2}=1.$ Using the matrix (\ref{house1}), we get
\[
\mathcal{H}_{\boldsymbol{x+y}}=\dfrac{1}{5}\left[
\begin{array}
[c]{ccc}%
4 & -1 & -2\\
-1 & 4 & -2\\
-4 & -4 & -3
\end{array}
\right]  \text{ \ and \ }\mathcal{H}_{\boldsymbol{y}}=\dfrac{1}{25}\left[
\begin{array}
[c]{ccc}%
9 & -16 & -12\\
-16 & 9 & -12\\
-24 & -24 & 7
\end{array}
\right]  .
\]
Thus, the rotation matrix is
\[
R=\mathcal{H}_{\boldsymbol{y}}\mathcal{H}_{\boldsymbol{x+y}}=\dfrac{1}%
{5}\left[
\begin{array}
[c]{ccc}%
4 & -1 & 2\\
-1 & 4 & 2\\
-4 & -4 & 3
\end{array}
\right]
\]
which satisfies $R\left(  \boldsymbol{x}\right)  =\boldsymbol{y}$, $\det R=1,$
and $R^{t}\Omega R=\Omega$ where $\Omega=diag\left(  2,2,1\right)  .$ Note
that, we can obtain same matrix using (\ref{rodmat}).
\end{example}

\subsection{Elliptic Quaternions}

\qquad Quaternions were discovered by Sir William R. Hamilton in 1843 and the
theory of quaternions was expanded to include applications such as rotations
in the early 20th century. The most important property of the quaternions is
that every unit quaternion represents a rotation and this plays an important
role in the study of rotations in 3-dimensional vector spaces. Using unit
quaternions is a useful, natural, and elegant way to perceive rotations.
Quaternions are used especially in computer vision, computer graphics,
animation, and kinematics.

\qquad Quaternion algebra $\mathbb{H}$ is an associative, non-commutative
division ring with four basic elements $\{1,\mathbf{i},\mathbf{j}%
,\mathbf{k}\}$ satisfying the equalities $\mathbf{i}^{2}=\mathbf{j}%
^{2}=\mathbf{k}^{2}=\mathbf{ijk}=-1.$ We can express any quaternion $q$ as
$q=\left(  q_{1},q_{2},q_{3},q_{4}\right)  =q_{1}+q_{2}\mathbf{i}%
+q_{3}\mathbf{j}+q_{4}\mathbf{k}$ or $q=S_{q}+\mathbf{V}q$ where the symbols
$S_{q}=q_{1}$ and $\mathbf{V}_{q}=q_{2}\mathbf{i}+q_{3}\mathbf{j}%
+q_{4}\mathbf{k}$ denote the scalar and vector parts of $q,$ respectively$.$
If $S_{q}=0$ then $q$ is called a pure quaternion. The conjugate of $q$ is
denoted by $\overline{q},$ and defined as $\overline{q}=S_{q}-\mathbf{V}_{q}$.
The norm of a quaternion $q=q_{0}+q_{1}\mathbf{i}+q_{2}\mathbf{j}%
+q_{3}\mathbf{k}$ is defined by $\sqrt{q\overline{q}}=\sqrt{\overline{q}%
q}=\sqrt{q_{0}^{2}+q_{1}^{2}+q_{2}^{2}+q_{3}^{2}}$ and is denoted by $N_{q}$.
We say that $q_{0}=q/N_{q}$ is unit quaternion if $q\neq0.$ The set of unit
quaternions are denoted by $\mathbb{H}_{1}.$ Every unit quaternion can be
written in the form $q_{0}=\cos\theta+\boldsymbol{\varepsilon}_{0}\sin\theta$
where $\boldsymbol{\varepsilon}_{0}$ is a unit vector satisfying the equality
$\boldsymbol{\varepsilon}_{0}^{2}=-1.$It is called the axis of the quaternion
\cite{qua2}, \cite{qua3}.

\qquad Let $q$ and $r$ be two quaternions. Then, the linear transformation
$R_{q}:\mathbb{H}\rightarrow\mathbb{H}$ defined by $R_{q}\left(  r\right)
=qrq^{-1}$ is a quaternion that has the same norm and scalar as $r$. Since the
scalar part of the quaternion $r$ doesn't change under $R_{q},$ we will only
examine how its vector part $\mathbf{V}_{r}$ changes under the transformation
$R_{q}.$ We can interpret the rotation of a vector in the Euclidean 3-space
using the quaternion product $q\mathbf{V}_{r}q^{-1}$.

\qquad If $q=q_{0}+q_{1}\mathbf{i}+q_{2}\mathbf{j}+q_{3}\mathbf{k}=\cos
\theta+\boldsymbol{\varepsilon}_{0}\sin\theta$ is a unit quaternion, then,
using the linear transformation $R_{q}\left(  \mathbf{V}_{r}\right)
=q\mathbf{V}_{r}q^{-1}$, the corresponding rotation matrix can be found as
\begin{equation}
R_{q}=\left[
\begin{array}
[c]{ccc}%
q_{0}^{2}+q_{1}^{2}-q_{2}^{2}-q_{3}^{2} & -2q_{0}q_{3}+2q_{1}q_{2} &
2q_{0}q_{2}+2q_{1}q_{3}\\
2q_{1}q_{2}+2q_{3}q_{0} & q_{0}^{2}-q_{1}^{2}+q_{2}^{2}-q_{3}^{2} &
2q_{2}q_{3}-2q_{1}q_{0}\\
2q_{1}q_{3}-2q_{2}q_{0} & 2q_{1}q_{0}+2q_{2}q_{3} & q_{0}^{2}-q_{1}^{2}%
-q_{2}^{2}+q_{3}^{2}%
\end{array}
\right]  . \tag{4}%
\end{equation}
This rotation matrix represents a rotation through angle $2\theta$ about the
axis $\boldsymbol{\varepsilon}=\left(  q_{1},q_{2},q_{3}\right)  .$ In the
Lorentzian space, the rotation matrix corresponding to a timelike quaternion
$q=q_{0}+q_{1}\mathbf{i}+q_{2}\mathbf{j}+q_{3}\mathbf{k}$ is,
\[
R_{q}=\left[
\begin{array}
[c]{ccc}%
q_{1}^{2}+q_{2}^{2}+q_{3}^{2}+q_{4}^{2} & 2q_{1}q_{4}-2q_{2}q_{3} &
-2q_{1}q_{3}-2q_{2}q_{4}\\
2q_{2}q_{3}+2q_{4}q_{1} & q_{1}^{2}-q_{2}^{2}-q_{3}^{2}+q_{4}^{2} &
-2q_{3}q_{4}-2q_{2}q_{1}\\
2q_{2}q_{4}-2q_{3}q_{1} & 2q_{2}q_{1}-2q_{3}q_{4} & q_{1}^{2}-q_{2}^{2}%
+q_{3}^{2}-q_{4}^{2}%
\end{array}
\right]
\]
where the set of unit timelike quaternions satisfies the properties
$q\overline{q}=1,$ $\mathbf{i}^{2}=-1,$ $\mathbf{j}^{2}=\mathbf{k}%
^{2}=\mathbf{ijk}=1$ (see $\cite{qua4}).$

\subsubsection{Elliptic Quaternions}

\qquad To get an elliptical rotation matrix, firs we define the set of
elliptic quaternions suitable for the ellipsoid $a_{1}x^{2}+a_{2}y^{2}%
+a_{3}z^{2}=1$. Let's take four basic elements $\{1,\mathbf{i},\mathbf{j}%
,\mathbf{k}\}$ satisfying the equalities

\begin{center}
$\mathbf{i}^{2}=-a_{1},$ $\ \ \ \ \mathbf{j}^{2}=-a_{2},$ $\ \ \ \ \mathbf{k}%
^{2}=-a_{3}$
\end{center}

and

\begin{center}
$\mathbf{ij}=\dfrac{\Delta}{a_{3}}\mathbf{k=-ji},$ $\ \ \ \ \ \mathbf{jk}%
=\dfrac{\Delta}{a_{1}}\mathbf{i=-kj},$ $\ \ \ \ \ \ \ \mathbf{ki}%
=\dfrac{\Delta}{a_{2}}\mathbf{j=-ik}$
\end{center}

where $a_{1},a_{2},a_{3}\in%
\mathbb{R}
^{+}$ and $\Delta=\sqrt{a_{1}a_{2}a_{3}}\mathbf{.}$

\bigskip

\qquad The set of elliptic quaternions will be denoted by $\mathbb{H}%
_{a_{1},a_{2},a_{3}}.$ This set is an associative, non-commutative division
ring with our basic elements $\{1,\mathbf{i},\mathbf{j},\mathbf{k}\}.$ If we
take $a_{1}=a_{2}=a_{3}=1,$ we get the usual quaternion algebra. The elliptic
quaternion product table is given below.

\begin{center}%
\begin{tabular}
[c]{|c||c|c|c|c|}\hline
& $\mathbf{1}$ & $\mathbf{i}$ & $\mathbf{j}$ & $\mathbf{k}$\\\hline\hline
$\mathbf{1}$ & $\mathbf{1}$ & $\mathbf{i}$ & $\mathbf{j}$ & $\mathbf{k}%
$\\\hline
$\mathbf{i}$ & $\mathbf{i}$ & $-a_{1}$ & $\Delta\mathbf{k/}a_{3}$ &
$-\Delta\mathbf{j/}a_{2}$\\\hline
$\mathbf{j}$ & $\mathbf{j}$ & $\mathbf{-}\Delta\mathbf{k/}a_{3}$ & $-a_{2}$ &
$\Delta\mathbf{i/}a_{1}$\\\hline
$\mathbf{k}$ & $\mathbf{k}$ & $\Delta\mathbf{j/}a_{2}$ & $-\Delta
\mathbf{i/}a_{1}$ & $-a_{3}$\\\hline
\end{tabular}

\end{center}

\bigskip

\qquad The elliptic quaternion product of two quaternions $p=p_{0}%
+p_{1}\mathbf{i}+p_{2}\mathbf{j}+p_{3}\mathbf{k}$ and $q=q_{0}+q_{1}%
\mathbf{i}+q_{2}\mathbf{j}+q_{3}\mathbf{k}$ is defined as%
\begin{equation}
p_{0}q_{0}-\mathcal{B}\left(  \mathbf{V}_{p},\mathbf{V}_{q}\right)
+p_{0}\mathbf{V}_{q}+q_{0}\mathbf{V}_{p}+\mathcal{V}\left(  \mathbf{V}%
_{p}\times\mathbf{V}_{q}\right)  \label{vectin}%
\end{equation}
where $\mathcal{B}\left(  \mathbf{V}_{p},\mathbf{V}_{q}\right)  $ and
$\mathcal{V}\left(  \mathbf{V}_{p}\times\mathbf{V}_{q}\right)  $ are the
elliptical scalar product and\ the elliptical vector product, respectively. If
$p$ and $q$ are pure, then%
\begin{align*}
pq  &  =-\mathcal{B}\left(  \mathbf{V}_{p},\mathbf{V}_{q}\right)
+\mathcal{V}\left(  \mathbf{V}_{p}\times\mathbf{V}_{q}\right) \\
&  =-\left(  a_{1}p_{1}q_{1}+a_{2}p_{2}q_{2}+a_{3}p_{3}q_{3}\right)
+\Delta\left\vert
\begin{array}
[c]{ccc}%
i/a_{1} & j/a_{2} & k/a_{3}\\
p_{1} & p_{2} & p_{3}\\
q_{1} & q_{2} & q_{3}%
\end{array}
\right\vert .
\end{align*}
The elliptic quaternion product for $\mathbb{H}_{a_{1},a_{2},a_{3}}$ can be
expressed as

\begin{center}
$pq=\left[
\begin{array}
[c]{cccc}%
p_{0} & -a_{1}p_{1} & -a_{2}p_{2} & -a_{3}p_{3}\\
p_{1} & p_{0} & -\dfrac{p_{3}\Delta}{a_{1}} & \dfrac{p_{2}\Delta}{a_{1}}\\
p_{2} & \dfrac{p_{3}\Delta}{a_{2}} & p_{0} & -\dfrac{p_{1}\Delta}{a_{3}}\\
p_{3} & -\dfrac{p_{2}\Delta}{a_{3}} & \dfrac{p_{1}\Delta}{a_{3}} & p_{0}%
\end{array}
\right]  \left[
\begin{array}
[c]{c}%
q_{0}\\
q_{1}\\
q_{2}\\
q_{3}%
\end{array}
\right]  .\allowbreak$
\end{center}

For example, let $p,q\in\mathbb{H}_{2,2,1}$. Then, the elliptic quaternion
product of $p$ and $q$ defined is

\begin{center}
$pq=\left[
\begin{array}
[c]{cccc}%
p_{0} & -2p_{1} & -2p_{2} & -p_{3}\\
p_{1} & p_{0} & -p_{3} & p_{2}\\
p_{2} & p_{3} & p_{0} & -p_{1}\\
p_{3} & -2p_{2} & 2p_{1} & p_{0}%
\end{array}
\right]  \allowbreak\allowbreak\left[
\begin{array}
[c]{c}%
q_{0}\\
q_{1}\\
q_{2}\\
q_{3}%
\end{array}
\right]  .$
\end{center}

For $p=1+2\mathbf{i}+3\mathbf{j}+4\mathbf{k}$ and $q=2+4\mathbf{i}%
+\mathbf{j}+3\mathbf{k},$ we get $pq=\left(  -32,13,17,-9\right)  .$ This can
also be calculated using the product table

\begin{center}%
\begin{tabular}
[c]{|c||c|c|c|c|}\hline
& $\mathbf{1}$ & $\mathbf{i}$ & $\mathbf{j}$ & $\mathbf{k}$\\\hline\hline
$\mathbf{1}$ & $\mathbf{1}$ & $\mathbf{i}$ & $\mathbf{j}$ & $\mathbf{k}%
$\\\hline
$\mathbf{i}$ & $\mathbf{i}$ & $-2$ & $2\mathbf{k}$ & $-\mathbf{j}$\\\hline
$\mathbf{j}$ & $\mathbf{j}$ & $\mathbf{-}2\mathbf{k}$ & $-2$ & $\mathbf{i}%
$\\\hline
$\mathbf{k}$ & $\mathbf{k}$ & $\mathbf{j}$ & $-\mathbf{i}$ & $-1$\\\hline
\end{tabular}
.
\end{center}

Remember that the algebra is formed by a vector space $\mathbb{V}$ equipped
with a quadratic form $Q$ with the following equalities%
\begin{align*}
\mathbf{v}^{2}  &  =Q\left(  \mathbf{v}\right)  ;\\
\mathbf{uv+vu}  &  =2\mathcal{B}_{Q}\left(  \mathbf{u},\mathbf{v}\right)
\end{align*}
is called a \textit{Clifford algebra }and is denoted by $C\ell\left(
\mathbb{V},Q\right)  .$ If $\left\{  \mathbf{e}_{1},...,\mathbf{e}%
_{n}\right\}  $ is a base for an n-dimensional vector space $\mathbb{V},$ then
$\emph{C}\mathbf{\ell}\left(  \mathbb{V},Q\right)  $ is formed by the
multivectors
\[
\left\{  1\right\}  \cup\left\{  \mathbf{e}_{i_{1}}\mathbf{e}_{i_{2}%
}...\mathbf{e}_{i_{k}}:1\leq i_{1}\leq...\leq i_{k}\leq n,\text{ }1\leq k\leq
n\right\}
\]
with $\dim\left(  C\ell\left(  \mathbb{V},Q\right)  \right)  =2^{n}.$ Since
the Clifford product of two even multivectors is an even multivector, they
define an even subalgebra of $C\ell\left(  \mathbb{V},Q\right)  $. The even
subalgebra of an $n$-dimensional Clifford algebra is isomorphic to a Clifford
algebra of $(n-1)$ dimensions and it is denoted by $C\ell^{+}\left(
\mathbb{V},Q\right)  .$ The Hamiltonian quaternion algebra $\mathbb{H}$ is
isomorphic with the even subalgebra $C\ell_{3,0}^{+}=C\ell\left(
\mathbb{R}
^{3},Q=x_{1}^{2}+x_{2}^{2}+x_{3}^{2}\right)  $ by $\{1,e_{2}e_{3}%
\rightarrow\mathbf{j},e_{1}e_{3}\rightarrow\mathbf{k},e_{1}e_{2}%
\rightarrow\mathbf{i}\}$ and the split quaternion algebra $\widehat
{\mathbb{H}}$ is isomorphic with the even subalgebra $C\ell_{2,1}^{+}%
=C\ell\left(
\mathbb{R}
_{1}^{3},Q=-x_{1}^{2}+x_{2}^{2}+x_{3}^{2}\right)  $ by $\{1,e_{2}%
e_{3}\rightarrow\mathbf{i},e_{3}e_{1}\rightarrow\mathbf{k},e_{1}%
e_{2}\rightarrow\mathbf{j}\}$ \cite{qua2}$.$ Similarly, the elliptic
quaternion algebra is an even subalgebra of the Clifford algebra

\begin{center}
$\emph{C}\mathbf{\ell}\left(
\mathbb{R}
^{3}\right)  =\left\{  q=q_{0}+\mathbf{e}_{1}q_{1}+\mathbf{e}_{2}%
q_{2}+\mathbf{e}_{3}q_{3}:\mathbf{e}_{1}^{2}=a_{1},\text{ }\mathbf{e}_{2}%
^{2}=a_{2},\text{ }\mathbf{e}_{3}^{2}=a_{3},\text{ }\mathbf{e}_{i}%
\mathbf{e}_{j}+\mathbf{e}_{j}\mathbf{e}_{i}=0\right\}  $
\end{center}

associated with the nondegenerate quadratic form $Q\left(  x\right)
=a_{1}x_{1}^{2}+a_{2}x_{2}^{2}+a_{3}x_{3}^{2}$ and is denoted by
$\emph{C}\mathbf{\ell}^{+}\left(
\mathbb{R}
^{3},a_{1}x_{1}^{2}+a_{2}x_{2}^{2}+a_{3}x_{3}^{2}\right)  ,$ or shortly
$\emph{C}\mathbf{\ell}^{+}\left(
\mathbb{R}
_{a_{1},a_{2},a_{3}}^{3}\right)  .$ $\mathbb{H}_{a_{1},a_{2},a_{3}}$ is
isomorphic to $\emph{C}\mathbf{\ell}^{+}\left(
\mathbb{R}
_{a_{1},a_{2},a_{3}}^{3}\right)  $ with $\{1,\dfrac{a_{1}}{\Delta}e_{2}%
e_{3}\rightarrow\mathbf{i},\dfrac{a_{2}}{\Delta}e_{1}e_{3}\rightarrow
\mathbf{j},\dfrac{a_{3}}{\Delta}e_{1}e_{2}\rightarrow\mathbf{k}\}.$

For the quadratic form $Q=a_{1}x_{1}^{2}+a_{1}x_{2}^{2}+a_{1}x_{3}^{2},$
recall that the elliptical inner product can be obtained by using the
equality
\[
\mathcal{B}_{Q}\left(  \mathbf{x},\mathbf{y}\right)  =\frac{1}{2}\left[
Q\left(  \mathbf{x+y}\right)  -Q\left(  \mathbf{x}\right)  -Q\left(
\mathbf{y}\right)  \right]  .
\]
So, we get $\mathcal{B}_{Q}\left(  \mathbf{x},\mathbf{y}\right)  =a_{1}%
x_{1}y_{1}+a_{2}x_{2}y_{2}+a_{3}x_{3}y_{3}$ for $\mathbf{x=}\left(
x_{1},x_{2},x_{3}\right)  $ and $\mathbf{y=}\left(  y_{1},y_{2},y_{3}\right)
.$ Thus, we can construct an elliptic quaternion algebra for any elliptical
inner product space.

Conjugate, norm and inverse of an elliptic quaternion $q=q_{0}+q_{1}%
\mathbf{i}+q_{2}\mathbf{j}+q_{3}\mathbf{k}$ can be defined similar to usual
quaternions :
\[
\overline{q}=q_{0}-q_{1}\mathbf{i}-q_{2}\mathbf{j}-q_{3}\mathbf{k,}%
\ \ \ N_{q}=\sqrt{q\overline{q}}=\sqrt{\overline{q}q}=\sqrt{q_{0}^{2}%
+a_{1}q_{1}^{2}+a_{2}q_{2}^{2}+a_{3}q_{3}^{2}},\text{ \ \ }q^{-1}%
=\dfrac{\overline{q}}{N_{q}}.
\]
Also, each elliptic quaternion $q=q_{0}+q_{1}\mathbf{i}+q_{2}\mathbf{j}%
+q_{3}\mathbf{k}$ can be written in the form
\[
q_{0}=N_{q}\left(  \cos\theta+\boldsymbol{\varepsilon}_{0}\sin\theta\right)
\]
where

\begin{center}
$\cos\theta=\dfrac{q_{0}}{N_{q}}$ \ \ and \ $\sin\theta=\dfrac{\sqrt
{a_{1}q_{1}^{2}+a_{2}q_{2}^{2}+a_{3}q_{3}^{2}}}{N_{q}}.$
\end{center}

Here, $\boldsymbol{\varepsilon}_{0}=\dfrac{\left(  q_{1},q_{2},q_{3}\right)
}{\sqrt{a_{1}q_{1}^{2}+a_{2}q_{2}^{2}+a_{3}q_{3}^{2}}}$ is a unit vector in
the scalar product space $%
\mathbb{R}
_{a_{1},a_{2},a_{3}}^{3}$ satisfying the equality $\boldsymbol{\varepsilon
}_{0}^{2}=-1$. It is called the axis of the rotation. For example, if
$q=1+2\mathbf{i}+\mathbf{j}+5\mathbf{k}\in\mathbb{H}_{2,2,1},$ then
$N_{q}=\sqrt{1^{2}+2\cdot2^{2}+2\cdot1^{2}+1\cdot5^{2}}=\allowbreak6$ and we
can write

\begin{center}
$q=\dfrac{1}{6}+\dfrac{\sqrt{35}}{6}\dfrac{\left(  2,1,5\right)  }{\sqrt{35}%
}=\cos\theta+\dfrac{\left(  2,1,5\right)  }{\sqrt{35}}\sin\theta$
\end{center}

where $\boldsymbol{\varepsilon}_{0}=\dfrac{1}{\sqrt{35}}\left(  2,1,5\right)
$ is a unit vector in $%
\mathbb{R}
_{2,2,1}^{3}$ with $\boldsymbol{\varepsilon}_{0}^{2}=-1$.

\begin{theorem}
Each a unit elliptic quaternion represents an elliptical rotation on the an
ellipsoid. If
\[
q=q_{0}+q_{1}\mathbf{i}+q_{2}\mathbf{j}+q_{3}\mathbf{k}=\cos\theta
+\boldsymbol{\varepsilon}_{0}\sin\theta\in\mathbb{H}_{a_{1},a_{2},a_{3}}%
\]
is a unit elliptic quaternion, then the linear map $R_{\theta}\left(
\mathbf{v}\right)  =q\mathbf{v}q^{-1}$ gives an elliptical rotation through
the elliptical angle $2\theta,$ about the axis $\boldsymbol{\varepsilon}_{0},$
where $\mathbf{v}\in%
\mathbb{R}
^{3}.$ The elliptical rotation matrix to corresponding to the quaternion $q$
is
\begin{equation}
R_{\theta}^{q}=\left[
\begin{array}
[c]{ccc}%
q_{0}^{2}{\small +}a_{1}q_{1}^{2}-a_{2}q_{2}^{2}-a_{3}q_{3}^{2} & 2a_{2}%
q_{1}q_{2}-2\dfrac{q_{0}q_{3}\Delta}{a_{1}} & 2a_{3}q_{1}q_{3}+2\dfrac
{q_{0}q_{2}\Delta}{a_{1}}\\
2a_{1}q_{1}q_{2}+2\dfrac{q_{0}q_{3}\Delta}{a_{2}} & q_{0}^{2}-a_{1}q_{1}%
^{2}{\small +}a_{2}q_{2}^{2}-a_{3}q_{3}^{2} & 2a_{3}q_{2}q_{3}-2\dfrac
{q_{0}q_{1}\Delta}{a_{2}}\\
2a_{1}q_{1}q_{3}-2\dfrac{q_{0}q_{2}\Delta}{a_{3}} & 2a_{2}q_{2}q_{3}%
+2\dfrac{q_{0}q_{1}\Delta}{a_{3}} & q_{0}^{2}-a_{1}q_{1}^{2}-a_{2}q_{2}%
^{2}{\small +}a_{3}q_{3}^{2}%
\end{array}
\right]  . \label{quatrot}%
\end{equation}

\end{theorem}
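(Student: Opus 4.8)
The plan is to follow the classical three-step template for quaternionic rotations, adapted to the elliptic inner product. First I would verify that the map $R_\theta(\mathbf v)=q\mathbf v q^{-1}$ is well-defined on the pure elliptic quaternions: since $q$ is a unit quaternion, $q^{-1}=\overline q$, and using the multiplication table together with the fact that conjugation reverses products ($\overline{pq}=\overline q\,\overline p$), one checks that $q\mathbf v q^{-1}$ has zero scalar part whenever $\mathbf v$ does, so $R_\theta$ restricts to a linear endomorphism of $\mathbb R^3_{a_1,a_2,a_3}$. Next I would show $R_\theta$ is $\mathcal B$-orthogonal. The key identity is that the elliptic quaternion norm is multiplicative, $N_{pq}=N_pN_q$, which follows from $\overline{pq}=\overline q\,\overline p$ and associativity; hence $N_{q\mathbf v q^{-1}}=N_q N_{\mathbf v} N_{q^{-1}}=N_{\mathbf v}$, and since for a pure quaternion $N_{\mathbf v}^2=\mathcal B(\mathbf v,\mathbf v)=\|\mathbf v\|_{\mathcal B}^2$, the map preserves the elliptic norm. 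By polarization it preserves $\mathcal B$, so $R_\theta\in\mathbf O_{\mathcal B}(3)$.

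To see that $\det R_\theta=+1$ rather than $-1$, I would use a connectedness argument: the unit elliptic quaternions form a connected set (topologically a $3$-sphere in the norm $N_q$), the assignment $q\mapsto R_\theta$ is continuous, and at $q=1$ it yields the identity with determinant $1$; since $\det$ takes values in $\{\pm1\}$ it is constant, hence always $+1$. Thus $R_\theta\in\mathbf{SO}_{\mathcal B}(3)$ is an elliptical rotation matrix. Then I would pin down the axis: writing $q=\cos\theta+\boldsymbol\varepsilon_0\sin\theta$ with $\boldsymbol\varepsilon_0^2=-1$, one has $q\boldsymbol\varepsilon_0=\boldsymbol\varepsilon_0 q$ (the axis commutes with $q$ because it commutes with both $1$ and itself), so $R_\theta(\boldsymbol\varepsilon_0)=q\boldsymbol\varepsilon_0 q^{-1}=\boldsymbol\varepsilon_0$; hence $\boldsymbol\varepsilon_0$ is the fixed axis. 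For the angle, I would decompose an arbitrary $\mathbf v$ into its component along $\boldsymbol\varepsilon_0$ and its $\mathcal B$-orthogonal complement, and compute $q\mathbf v q^{-1}$ on the orthogonal part using $q^2=\cos2\theta+\boldsymbol\varepsilon_0\sin2\theta$ together with the relation $\boldsymbol\varepsilon_0\mathbf w=-\mathbf w\boldsymbol\varepsilon_0$ for $\mathbf w$ $\mathcal B$-orthogonal to $\boldsymbol\varepsilon_0$ (which follows from the vanishing of $\mathcal B(\boldsymbol\varepsilon_0,\mathbf w)$ in the product formula~(\ref{vectin})); this exhibits the action on the orthogonal plane as rotation by $2\theta$, exactly as in the Euclidean case.

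Finally, to obtain the explicit matrix~(\ref{quatrot}), I would apply $R_\theta$ to the standard basis vectors $\mathbf i,\mathbf j,\mathbf k$ using the elliptic multiplication table and the matrix form of the elliptic quaternion product given earlier, then read off the columns, simplifying the diagonal entries with the unit-norm relation $q_0^2+a_1q_1^2+a_2q_2^2+a_3q_3^2=1$ so that, e.g., $q_0^2+a_1q_1^2-a_2q_2^2-a_3q_3^2 = 2(q_0^2+a_1q_1^2)-1$. I expect the routine but lengthy expansion of $q\mathbf v q^{-1}$ into the nine matrix entries to be the main bookkeeping obstacle — in particular keeping the factors of $\Delta/a_i$ coming from the cross-term $\mathcal V(\mathbf V_q\times\mathbf v\times\cdots)$ straight — while the conceptual steps (well-definedness, orthogonality, the determinant, and the axis/angle identification) are all immediate once the multiplicativity of $N_q$ and the commutation relations of $\boldsymbol\varepsilon_0$ are in hand.
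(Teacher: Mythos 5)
Your proposal is correct and follows essentially the same route as the paper's proof: the matrix (\ref{quatrot}) is read off from the images $R_\theta(\mathbf{i}),R_\theta(\mathbf{j}),R_\theta(\mathbf{k})$, the axis is identified via $q\boldsymbol{\varepsilon}_0=\boldsymbol{\varepsilon}_0 q$, and the angle $2\theta$ is obtained by computing the action on a vector $\mathcal{B}$-orthogonal to $\boldsymbol{\varepsilon}_0$ (the paper expands $(\cos\theta+\boldsymbol{\varepsilon}_0\sin\theta)\boldsymbol{\varepsilon}_1(\cos\theta-\boldsymbol{\varepsilon}_0\sin\theta)$ directly, which is equivalent to your $q\mathbf{w}q^{-1}=q^2\mathbf{w}$ shortcut via anticommutation). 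Where you diverge is in justifying the sub-claims that the paper merely asserts: the paper states ``it can be seen that'' $R_\theta$ preserves the norm and then verifies $R_\theta^t\Omega R_\theta=\Omega$ and $\det R_\theta=1$ on the explicit matrix, whereas you derive $\mathcal{B}$-orthogonality conceptually from the multiplicativity $N_{pq}=N_pN_q$ (via $\overline{pq}=\overline{q}\,\overline{p}$) together with $N_{\mathbf{v}}^2=\|\mathbf{v}\|_{\mathcal{B}}^2$ for pure $\mathbf{v}$, and you settle $\det=+1$ by a connectedness argument on the unit quaternion ellipsoid in $\mathbb{R}^4$. Both of these arguments are valid here and arguably cleaner than brute-force verification; the paper's direct verification buys nothing conceptually but does not depend on knowing that the unit elliptic quaternions are connected or that conjugation reverses products in this twisted algebra, facts you would need to check (both are routine from the multiplication table).
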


\begin{proof}
It can be seen that $R_{\theta}$ is a linear transformation and preserves the
norm. Using the equalities,%
\begin{align*}
R_{\theta}\left(  \mathbf{i}\right)   &  =\left(  a_{1}q_{1}^{2}q_{0}%
^{2}{\small -}q_{2}^{2}a_{2}{\small -}q_{3}^{2}a_{3}\right)  \mathbf{i}%
+2\left(  a_{1}q_{1}q_{2}{\small +}q_{0}q_{3}\sqrt{\dfrac{a_{1}a_{3}}{a_{2}}%
}\right)  \mathbf{j}{\small +}2\left(  q_{1}q_{3}a_{1}{\small -}q_{0}%
q_{2}\sqrt{\dfrac{a_{1}a_{2}}{a_{3}}}\right)  \mathbf{k,}\\
R_{\theta}\left(  \mathbf{j}\right)   &  =2\left(  a_{2}q_{1}q_{2}%
{\small -}q_{0}q_{3}\sqrt{\dfrac{a_{2}a_{3}}{a_{1}}}\right)  \mathbf{i}%
{\small +}\left(  a_{2}q_{2}^{2}+q_{0}^{2}{\small -}q_{1}^{2}a_{1}%
{\small -}q_{3}^{2}a_{3}\right)  \mathbf{j}{\small +}2\left(  a_{2}q_{2}%
q_{3}{\small +}q_{0}q_{1}\sqrt{\dfrac{a_{1}a_{2}}{a_{3}}}\right)
\mathbf{k,}\\
R_{\theta}\left(  \mathbf{k}\right)   &  =2\left(  a_{3}q_{1}q_{3}%
{\small +}q_{0}q_{2}\sqrt{\dfrac{a_{2}a_{3}}{a_{1}}}\right)  \mathbf{i}%
{\small +}2\left(  a_{3}q_{2}q_{3}{\small -}q_{0}q_{1}\sqrt{\dfrac{a_{1}a_{3}%
}{a_{2}}}\right)  \mathbf{j}{\small +}\left(  a_{3}q_{3}^{2}+q_{0}%
^{2}{\small -}q_{1}^{2}a_{1}{\small -}q_{2}^{2}a_{2}\right)  \mathbf{k,}%
\end{align*}
we can obtain (\ref{quatrot}). So, the rotation matrix (\ref{quatrot}) is an
elliptical rotation matrix on the ellipsoid $a_{1}x^{2}+a_{2}y^{2}+a_{3}%
z^{2}=1$. That is, the equalities $\det R_{\theta}=1$ and $R_{\theta}%
^{t}\Omega R_{\theta}=\Omega$ are satisfied. Also, note that, if we take
$a_{1}=a_{2}=a_{3}=1$, the standard rotation matrix is obtained. Now, let's
choose an orthonormal set $\left\{  \boldsymbol{\varepsilon}_{0}%
,\boldsymbol{\varepsilon}_{1},\boldsymbol{\varepsilon}_{2}\right\}  $
satisfying the equalities
\[
\mathcal{V}\left(  \boldsymbol{\varepsilon}_{0}\times\boldsymbol{\varepsilon
}_{1}\right)  =\boldsymbol{\varepsilon}_{2},\ \ \mathcal{V}\left(
\boldsymbol{\varepsilon}_{2}\times\boldsymbol{\varepsilon}_{0}\right)
=\boldsymbol{\varepsilon}_{1},\ \ \mathcal{V}\left(  \boldsymbol{\varepsilon
}_{1}\times\boldsymbol{\varepsilon}_{2}\right)  =\boldsymbol{\varepsilon}%
_{0}.
\]
If $\boldsymbol{\varepsilon}$ is a vector in the plane of the
$\boldsymbol{\varepsilon}_{0}$ and $\boldsymbol{\varepsilon}_{1}$, we can
write it as
\[
\boldsymbol{\varepsilon}=\cos\alpha\boldsymbol{\varepsilon}_{0}+\sin
\alpha\boldsymbol{\varepsilon}_{1}.
\]
To compute $R_{\theta}^{q}\left(  \boldsymbol{\varepsilon}\right)
=q\boldsymbol{\varepsilon}q^{-1}$, let's find how $\boldsymbol{\varepsilon
}_{0}$ and $\boldsymbol{\varepsilon}_{1}$ change under the transformation
$R_{\theta}^{q}.$ Since $\mathbf{V}_{q}$ is parallel to
$\boldsymbol{\varepsilon}_{0},$ we have $q\boldsymbol{\varepsilon}%
_{0}=\boldsymbol{\varepsilon}_{0}q$ by (\ref{vect}) and $R_{q}\left(
\boldsymbol{\varepsilon}_{0}\right)  =q\boldsymbol{\varepsilon}_{0}%
q^{-1}=\boldsymbol{\varepsilon}_{0}qq^{-1}=\boldsymbol{\varepsilon}_{0}$. So,
$\boldsymbol{\varepsilon}_{0}$ is not changed under the transformation
$R_{\theta}^{q}$. It means that $\boldsymbol{\varepsilon}_{0}$ is the rotation
axis. On the other hand,%
\begin{align*}
R_{q}\left(  \boldsymbol{\varepsilon}_{1}\right)   &
=q\boldsymbol{\varepsilon}_{1}q^{-1}\\
&  =\left(  \cos\theta+\boldsymbol{\varepsilon}_{0}\sin\theta\right)
\boldsymbol{\varepsilon}_{1}\left(  \cos\theta-\boldsymbol{\varepsilon}%
_{0}\sin\theta\right) \\
&  =\boldsymbol{\varepsilon}_{1}\cos^{2}\theta-\cos\theta\sin\theta\left(
\boldsymbol{\varepsilon}_{1}\boldsymbol{\varepsilon}_{0}\right)  +\cos
\theta\sin\theta\left(  \boldsymbol{\varepsilon}_{0}\boldsymbol{\varepsilon
}_{1}\right)  -\left(  \boldsymbol{\varepsilon}_{0}\boldsymbol{\varepsilon
}_{1}\right)  \boldsymbol{\varepsilon}_{0}\sin^{2}\theta.
\end{align*}
Since we know that $\boldsymbol{\varepsilon}_{1}\boldsymbol{\varepsilon}%
_{0}=\mathcal{V}\left(  \boldsymbol{\varepsilon}_{1}\times
\boldsymbol{\varepsilon}_{0}\right)  =-\mathcal{V}\left(
\boldsymbol{\varepsilon}_{0}\times\boldsymbol{\varepsilon}_{1}\right)
=-\boldsymbol{\varepsilon}_{0}\boldsymbol{\varepsilon}_{1}%
=-\boldsymbol{\varepsilon}_{2}$ for orthogonal, pure quaternions, we obtain%
\begin{align*}
R_{q}\left(  \boldsymbol{\varepsilon}_{1}\right)   &  =\boldsymbol{\varepsilon
}_{1}\cos^{2}\theta+\left(  \boldsymbol{\varepsilon}_{1}%
\boldsymbol{\varepsilon}_{0}\right)  \boldsymbol{\varepsilon}_{0}\sin
^{2}\theta+2\boldsymbol{\varepsilon}_{2}\cos\theta\sin\theta\\
&  =\boldsymbol{\varepsilon}_{1}\cos^{2}\theta+\boldsymbol{\varepsilon}%
_{1}\boldsymbol{\varepsilon}_{0}^{2}\sin^{2}\theta+2\boldsymbol{\varepsilon
}_{2}\cos\theta\sin\theta\\
&  =\boldsymbol{\varepsilon}_{1}\cos2\theta+\boldsymbol{\varepsilon}_{2}%
\sin2\theta
\end{align*}
That is, $\boldsymbol{\varepsilon}$ is rotated through the elliptical angle
$2\theta$ about $\boldsymbol{\varepsilon}_{0}$ by the transformation
$R_{q}(\boldsymbol{\varepsilon}).$
\end{proof}

\setcounter{theorem}{0}

\begin{corollary}
All elliptical rotations on an ellipsoid can be represented by elliptic
quaternions which is defined for that ellipsoid.
\end{corollary}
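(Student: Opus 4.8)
The plan is to reduce the statement to the classical fact, recalled in the introduction, that every Euclidean rotation in $\mathbf{SO}(3)$ is represented by a unit Hamilton quaternion, by transporting the whole picture through the linear isometry that turns the elliptical scalar product into the ordinary dot product. Put $D=\operatorname{diag}(\sqrt{a_1},\sqrt{a_2},\sqrt{a_3})$ and $\Phi(\mathbf{x})=D\mathbf{x}$. Since $\Omega=D^2$, one has $\mathcal{B}(\mathbf{u},\mathbf{w})=(D\mathbf{u})^t(D\mathbf{w})=\langle\Phi\mathbf{u},\Phi\mathbf{w}\rangle$, so $\Phi:\mathbb{R}^3_{a_1,a_2,a_3}\to\mathbb{R}^3$ is an isometry onto Euclidean $3$-space. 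Conjugation by $\Phi$ then carries $\mathbf{SO}_{\mathcal{B}}(3)$ isomorphically onto $\mathbf{SO}(3)$: if $R^t\Omega R=\Omega$ and $\det R=1$, then $\widetilde R:=DRD^{-1}$ satisfies $\widetilde R^t\widetilde R=I$ and $\det\widetilde R=1$, and conversely $R=D^{-1}\widetilde R D$ runs over $\mathbf{SO}_{\mathcal{B}}(3)$ as $\widetilde R$ runs over $\mathbf{SO}(3)$.

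Next I would set up the companion algebra isomorphism. Writing $\mathbf{I},\mathbf{J},\mathbf{K}$ for the standard Hamilton units, define $\psi:\mathbb{H}_{a_1,a_2,a_3}\to\mathbb{H}$ by $\psi(1)=1$, $\psi(\mathbf{i})=\sqrt{a_1}\,\mathbf{I}$, $\psi(\mathbf{j})=\sqrt{a_2}\,\mathbf{J}$, $\psi(\mathbf{k})=\sqrt{a_3}\,\mathbf{K}$, extended $\mathbb{R}$-linearly. A direct check against the elliptic product table shows $\psi$ is an algebra isomorphism: e.g. $\psi(\mathbf{i})^2=a_1\mathbf{I}^2=-a_1=\psi(\mathbf{i}^2)$, and $\psi(\mathbf{i})\psi(\mathbf{j})=\sqrt{a_1a_2}\,\mathbf{K}=\tfrac{\Delta}{\sqrt{a_3}}\mathbf{K}=\psi\!\left(\tfrac{\Delta}{a_3}\mathbf{k}\right)=\psi(\mathbf{ij})$, the remaining relations being identical by symmetry; this is precisely where $\Delta=\sqrt{a_1a_2a_3}$ enters and is the only computation that must be done by hand. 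Moreover $\psi$ preserves the norm, since $\psi(q)=q_0+q_1\sqrt{a_1}\mathbf{I}+q_2\sqrt{a_2}\mathbf{J}+q_3\sqrt{a_3}\mathbf{K}$ has Hamilton norm square $q_0^2+a_1q_1^2+a_2q_2^2+a_3q_3^2=N_q^2$, so it maps unit elliptic quaternions bijectively onto unit Hamilton quaternions; and on the pure part, under the usual identification of pure quaternions with $\mathbb{R}^3$, $\psi$ agrees with $\Phi$.

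Then I would assemble the two isomorphisms. For a unit elliptic quaternion $q$ and a pure quaternion $\mathbf{v}$, applying $\psi$ to $q\mathbf{v}q^{-1}$ and using $\psi(q^{-1})=\psi(q)^{-1}$ gives $\Phi\bigl(R^{q}_{\theta}(\mathbf{v})\bigr)=R^{\mathrm{Eucl}}_{\psi(q)}\bigl(\Phi\mathbf{v}\bigr)$, i.e. $\Phi R^{q}_{\theta}\Phi^{-1}=R^{\mathrm{Eucl}}_{\psi(q)}$ as $3\times 3$ matrices, where $R^{\mathrm{Eucl}}_{p}$ denotes the classical rotation matrix $(4)$ attached to a unit Hamilton quaternion $p$. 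Now take an arbitrary elliptical rotation $R\in\mathbf{SO}_{\mathcal{B}}(3)$. Then $\widetilde R=\Phi R\Phi^{-1}\in\mathbf{SO}(3)$, and by the classical surjectivity result recalled in the introduction there is a unit Hamilton quaternion $p$ with $R^{\mathrm{Eucl}}_{p}=\widetilde R$. Setting $q=\psi^{-1}(p)$, a unit elliptic quaternion in $\mathbb{H}_{a_1,a_2,a_3}$, we obtain $R^{q}_{\theta}=\Phi^{-1}R^{\mathrm{Eucl}}_{\psi(q)}\Phi=\Phi^{-1}\widetilde R\,\Phi=R$, so $R$ is represented by $q$, which is the corollary.

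The only genuine obstacle is bookkeeping: one must be careful that the scaling factors $\sqrt{a_i}$ are consistent between the multiplication table of $\mathbb{H}_{a_1,a_2,a_3}$ and the conjugation formula $q\mathbf{v}q^{-1}$, so that $\psi$ truly intertwines $R^{q}_{\theta}$ with the Euclidean $R^{\mathrm{Eucl}}_{\psi(q)}$; once the identity $\Phi R^{q}_{\theta}\Phi^{-1}=R^{\mathrm{Eucl}}_{\psi(q)}$ is established, the corollary is immediate. (Alternatively one could avoid citing the classical result by a Lie-group dimension count — $q\mapsto R^{q}_{\theta}$ is a smooth homomorphism from the compact connected unit-quaternion group into the connected $3$-dimensional group $\mathbf{SO}_{\mathcal{B}}(3)$ with discrete kernel $\{\pm1\}$, hence onto it — or, more in the spirit of the paper, by combining the Cartan--Dieudonn\'e factorization of an elliptical rotation into two $\mathcal{B}$-Householder reflections with the fact that such a product has the form $R^{q}_{\theta}$; but the reduction through $\Phi$ and $\psi$ is the shortest route.)
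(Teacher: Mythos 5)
Your proof is correct, and it follows a genuinely different route from the paper's. The paper offers no explicit argument for this corollary: it places it immediately after Theorem 7 and implicitly relies on the axis--angle picture, namely that every $R\in\mathbf{SO}_{\mathcal{B}}(3)$ has an invariant axis $\mathbf{u}$ and an elliptical angle $\theta$ (cf.\ the eigenvalue remarks and Steps 11--13 of the Algorithm), so that $q=\cos(\theta/2)+\mathbf{u}\sin(\theta/2)$ reproduces $R$ via the formula of Theorem 7. That route is short but leaves unverified that every $\mathcal{B}$-orthogonal matrix of determinant $1$ actually has the Rodrigues/quaternion normal form. Your transport argument closes exactly that gap: the conjugation $R\mapsto DRD^{-1}$ with $D=\mathrm{diag}(\sqrt{a_1},\sqrt{a_2},\sqrt{a_3})$ identifies $\mathbf{SO}_{\mathcal{B}}(3)$ with $\mathbf{SO}(3)$, the map $\psi(\mathbf{i})=\sqrt{a_1}\mathbf{I}$, $\psi(\mathbf{j})=\sqrt{a_2}\mathbf{J}$, $\psi(\mathbf{k})=\sqrt{a_3}\mathbf{K}$ is indeed an algebra isomorphism $\mathbb{H}_{a_1,a_2,a_3}\to\mathbb{H}$ (your check of $\psi(\mathbf{i})\psi(\mathbf{j})=\sqrt{a_1a_2}\,\mathbf{K}=\psi(\tfrac{\Delta}{a_3}\mathbf{k})$ is where the normalization $\Delta=\sqrt{a_1a_2a_3}$ is essential, and it is right), it is norm-preserving and restricts to $\Phi$ on pure quaternions, so the intertwining $\Phi R^{q}_{\theta}\Phi^{-1}=R^{\mathrm{Eucl}}_{\psi(q)}$ holds and surjectivity is pulled back from the classical Euclidean covering $S^3\to\mathbf{SO}(3)$. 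What your approach buys is a complete and self-contained proof of surjectivity (and, as a byproduct, the $2$-to-$1$ parametrization $\pm q\mapsto R^{q}_{\theta}$); what the paper's implicit approach buys is that it stays entirely inside the elliptical framework and directly produces the representing quaternion from the rotation's axis and angle, which is what the Algorithm needs in practice.
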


\setcounter{theorem}{5}

\begin{example}
Let's find the general elliptical rotation matrix for the ellipsoid
$2x^{2}+2y^{2}+z^{2}=1.$ Using (\ref{quatrot})$,$ we obtain,%
\[
R_{\theta}^{q}=\left[
\begin{array}
[c]{ccc}%
q_{0}^{2}+2q_{1}^{2}-2q_{2}^{2}-q_{3}^{2} & 4q_{1}q_{2}-2q_{0}q_{3} &
2q_{0}q_{2}+2q_{1}q_{3}\\
2q_{0}q_{3}+4q_{1}q_{2} & q_{0}^{2}-2q_{1}^{2}+2q_{2}^{2}-q_{3}^{2} &
2q_{2}q_{3}-2q_{0}q_{1}\\
4q_{1}q_{3}-4q_{0}q_{2} & 4q_{0}q_{1}+4q_{2}q_{3} & q_{0}^{2}-2q_{1}%
^{2}-2q_{2}^{2}+q_{3}^{2}%
\end{array}
\right]  .
\]
Here, $\det R_{\theta}=\left(  q_{0}^{2}+2q_{1}^{2}+2q_{2}^{2}+q_{3}%
^{2}\right)  ^{3}=1$ and $R_{\theta}^{t}\Omega R=\Omega$ where $\Omega
=diag\left(  2,2,1\right)  .$ For example, the unit quaternion $q=\left(
0,1/2,1/2,0\right)  $ represents an elliptical rotation on the ellipsoid
$2x^{2}+2y^{2}+z^{2}=1$ through the elliptical angle $\pi,$ about the axis
$\left(  1/2,1/2,0\right)  .$ And the elliptical rotation matrix is
\end{example}

\begin{center}
$R_{\pi}^{q}=\left[
\begin{array}
[c]{ccc}%
0 & 1 & 0\\
1 & 0 & 0\\
0 & 0 & -1
\end{array}
\right]  .$
\end{center}

\subsection{An Algorithm}

Generating 3-dimensional rotation matrix that rotates $x=\left(  x_{1}%
,y_{1},y_{1}\right)  $ to $y=\left(  x_{2},y_{2},z_{2}\right)  $ elliptically
on the ellipsoid $a_{1}x^{2}+a_{2}y^{2}+a_{3}z^{2}=1$

\hrulefill

\textbf{Step 1.} Write, $\boldsymbol{a=}\left(  a_{1},a_{2},a_{3}\right)  $
for the given ellipsoid $a_{1}x^{2}+a_{2}y^{2}+a_{3}z^{2}=1$ where
$a_{1},a_{2},a_{3}\in%
\mathbb{R}
^{+}.$

\textbf{Step 2.} Define Scalar Product $\mathcal{B}$, norm of a vector and
Scalar product constant $\Delta$ as follows :%
\begin{align*}
\mathcal{B}\left(  \boldsymbol{x,y,a}\right)   &  =\mathbf{B}\left(
x_{1},y_{1},z_{1},x_{2},y_{2},z_{2},a_{1},a_{2},a_{3}\right)  =a_{1}x_{1}%
x_{2}+a_{2}y_{1}y_{2}+a_{3}z_{1}z_{2},\\
\mathcal{N}\left(  \mathbf{x,}\boldsymbol{a}\right)   &  =\sqrt{\mathcal{B}%
\left(  \boldsymbol{x,x,a}\right)  },\\
\Delta &  =\sqrt{a_{1}a_{2}a_{3}}%
\end{align*}
where $\boldsymbol{x}=\left(  x_{1},y_{1},y_{1}\right)  $ and $\boldsymbol{y}%
=\left(  x_{2},y_{2},z_{2}\right)  .$

\textbf{Step 3.} Define Vector Product $\mathcal{V}$ as%
\begin{align*}
\mathcal{V}\left(  \mathbf{x,y}\boldsymbol{,a}\right)   &  =\mathbf{V}\left(
x_{1},y_{1},z_{1},x_{2},y_{2},z_{2},a_{1},a_{2},a_{3}\right) \\
&  =\left(  \dfrac{\Delta}{a_{1}}\left(  y_{1}z_{2}-y_{2}z_{1}\right)
,\dfrac{\Delta}{a_{2}}\left(  -x_{1}z_{2}+x_{2}z_{1}\right)  ,\dfrac{\Delta
}{a_{3}}\left(  x_{1}y_{2}-x_{2}y_{1}\right)  \right)  .
\end{align*}

\textbf{Step 4.} Choose the vectors $\boldsymbol{x}=\left(  x_{1},y_{1}%
,y_{1}\right)  $ and $\boldsymbol{y}=\left(  x_{2},y_{2},z_{2}\right)  $ to
find the elliptical rotation matrix that rotates $\boldsymbol{x}$ to
$\boldsymbol{y}$ elliptically on the ellipsoid.

\textbf{Step 5.} Find, $\mathcal{V}\left(  \mathbf{x,y}\boldsymbol{,a}\right)
$ and norm of the vectors $\boldsymbol{x,y}$ and $\mathcal{V}\left(
\mathbf{x,y}\boldsymbol{,a}\right)  $. That is, find $\mathcal{N}\left(
\mathbf{x,}\boldsymbol{a}\right)  ,$ $\mathcal{N}\left(  \mathbf{y,}%
\boldsymbol{a}\right)  $ and $\mathcal{N}\left(  \mathcal{V}\left(
\mathbf{x,y}\boldsymbol{,a}\right)  \mathbf{,}\boldsymbol{a}\right)  .$

\textbf{Step 6.} Find the rotation axis $\mathbf{u=}\left(  u_{1},u_{2}%
,u_{3}\right)  $ where

\begin{center}
$u_{1}=\dfrac{\Delta\left(  y_{1}z_{2}-y_{2}z_{1}\right)  }{a_{1}%
\cdot\mathcal{N}\left(  \mathcal{V}\left(  \mathbf{x,y}\boldsymbol{,a}\right)
\mathbf{,}\boldsymbol{a}\right)  },$ \ $u_{2}=\dfrac{-\Delta\left(  x_{1}%
z_{2}+x_{2}z_{1}\right)  }{a_{2}\cdot\mathcal{N}\left(  \mathcal{V}\left(
\mathbf{x,y}\boldsymbol{,a}\right)  \mathbf{,}\boldsymbol{a}\right)  },$
\ $u_{1}=\dfrac{\Delta\left(  x_{1}y_{2}-x_{2}y_{1}\right)  }{a_{3}%
\cdot\mathcal{N}\left(  \mathcal{V}\left(  \mathbf{x,y}\boldsymbol{,a}\right)
\mathbf{,}\boldsymbol{a}\right)  }.$
\end{center}

\textbf{Step 7.} Find the elliptical rotation angle using

\begin{center}
$\cos\theta=\dfrac{\mathcal{B}\left(  \boldsymbol{x,y,a}\right)  }%
{\sqrt{\mathcal{N}\left(  \mathbf{x,}\boldsymbol{a}\right)  }\sqrt
{\mathcal{N}\left(  \mathbf{y,}\boldsymbol{a}\right)  }}$
\end{center}

and define $C=\cos\theta$ and $S=\sin\theta$ where $S=\sqrt{1-C^{2}}.$

\textbf{Step 8. }Find the elliptical rotation matrix that rotates
$\boldsymbol{x}=\left(  x_{1},y_{1},y_{1}\right)  $ to $\boldsymbol{y}=\left(
x_{2},y_{2},z_{2}\right)  $ elliptically on the ellipsoid $a_{1}x^{2}%
+a_{2}y^{2}+a_{3}z^{2}=1$ using Rodrigues matrix%
\[
R\left(  \boldsymbol{u},C,S,\Delta\right)  =\left[
\begin{array}
[c]{ccc}%
a_{1}u_{1}^{2}{\small +}\left(  1{\small -}a_{1}u_{1}^{2}\right)  C &
{\small -}\dfrac{\Delta u_{3}S}{a_{1}}{\small -}a_{2}u_{1}u_{2}\left(
C{\small -}1\right)  & \dfrac{\Delta u_{2}S}{a_{1}}{\small -}a_{3}u_{1}%
u_{3}\left(  C{\small -}1\right) \\
\dfrac{\Delta u_{3}S}{a_{2}}{\small -}a_{1}u_{1}u_{2}\left(  C{\small -}%
1\right)  & a_{2}u_{2}^{2}{\small +}\left(  1{\small -}a_{2}u_{2}^{2}\right)
C & {\small -}\dfrac{\Delta u_{1}S}{a_{2}}{\small -}a_{3}u_{2}u_{3}\left(
C{\small -}1\right) \\
{\small -}\dfrac{\Delta u_{2}S}{a_{3}}{\small -}a_{1}u_{1}u_{3}\left(
C{\small -}1\right)  & \dfrac{\Delta u_{1}S}{a_{3}}{\small -}a_{2}u_{2}%
u_{3}\left(  C{\small -}1\right)  & a_{3}u_{3}^{2}{\small +}\left(
1{\small -}a_{3}u_{3}^{2}\right)  C
\end{array}
\right]
\]
where $\boldsymbol{u}=\left(  u_{1},u_{2},u_{3}\right)  .$

\textbf{Step 9.} Define the matrix

\begin{center}
$\mathcal{H}\left(  \mathbf{v,}\boldsymbol{a}\right)  =\dfrac{1}%
{\mathcal{N}\left(  \mathbf{v,}\boldsymbol{a}\right)  }\left[
\begin{array}
[c]{ccc}%
\mathcal{N}\left(  \mathbf{v,}\boldsymbol{a}\right)  -2a_{1}v_{1}^{2} &
-2a_{2}v_{1}v_{2} & -2a_{3}v_{1}v_{3}\\
-2a_{1}v_{2}v_{1} & \mathcal{N}\left(  \mathbf{v,}\boldsymbol{a}\right)
-2a_{2}v_{2}^{2} & -2a_{3}v_{2}v_{3}\\
-2a_{1}v_{3}v_{1} & -2a_{2}v_{3}v_{2} & \mathcal{N}\left(  \mathbf{v,}%
\boldsymbol{a}\right)  -2a_{3}v_{3}^{2}%
\end{array}
\right]  $
\end{center}

for a given $\boldsymbol{v=}\left(  v_{1},v_{2},v_{3}\right)  .$

\textbf{Step 10.} Find the elliptical rotation matrix that rotates
$\boldsymbol{x}=\left(  x_{1},y_{1},y_{1}\right)  $ to $\boldsymbol{y}=\left(
x_{2},y_{2},z_{2}\right)  $ elliptically on the ellipsoid $a_{1}x^{2}%
+a_{2}y^{2}+a_{3}z^{2}=1$ using Householder matrices

\begin{center}
$R\left(  \boldsymbol{x,y,a}\right)  =\mathcal{H}\left(  \boldsymbol{y}%
\mathbf{,}\boldsymbol{a}\right)  \mathcal{H}\left(  \boldsymbol{x+y}%
\mathbf{,}\boldsymbol{a}\right)  .$
\end{center}

\textbf{Step 11.} Define the set of Elliptic Quaternions $\mathbb{H}%
_{a_{1},a_{2},a_{3}}=\{q=q_{0}+q_{1}\mathbf{i}+q_{2}\mathbf{j}+q_{3}%
\mathbf{k,}$ $q_{0},q_{1},q_{2},q_{3}\in%
\mathbb{R}
\}$ with

\begin{center}
$\mathbf{i}^{2}=-a_{1},$ $\ \ \ \ \mathbf{j}^{2}=-a_{2},$ $\ \ \ \ \mathbf{k}%
^{2}=-a_{3}$
\end{center}

and

\begin{center}
$\mathbf{ij}=\dfrac{\Delta}{a_{3}}\mathbf{k=-ji},$ $\ \ \ \ \ \mathbf{jk}%
=\dfrac{\Delta}{a_{1}}\mathbf{i=-kj},$ $\ \ \ \ \ \ \ \mathbf{ki}%
=\dfrac{\Delta}{a_{2}}\mathbf{j=-ik.}$
\end{center}

\textbf{Step 12. }Find $c=\cos\dfrac{\theta}{2}=\sqrt{\dfrac{\cos\theta+1}{2}%
}$ and $s=\sqrt{1-c^{2}}.$ Define the quaternion
\[
q=\cos\dfrac{\theta}{2}+\boldsymbol{u}\sin\dfrac{\theta}{2}=c+su_{1}%
\mathbf{i}+su_{2}\mathbf{j}+su_{3}\mathbf{k}%
\]
where $\theta$ is the elliptical rotation angle and $u=\left(  u_{1}%
,u_{2},u_{3}\right)  =u_{1}i+u_{2}j+u_{3}k$ is the rotation axis obtained in
Step 6 and Step7.

\textbf{Step 13. }Find the elliptical rotation matrix that rotates
$\boldsymbol{x}$ to $\boldsymbol{y}$ elliptically on the ellipsoid using the matrix

\begin{center}
$R\left(  q,\boldsymbol{a},\Delta\right)  =\left[
\begin{array}
[c]{ccc}%
q_{0}^{2}{\small +}a_{1}q_{1}^{2}-a_{2}q_{2}^{2}-a_{3}q_{3}^{2} & 2a_{2}%
q_{1}q_{2}-2\dfrac{q_{0}q_{3}\Delta}{a_{1}} & 2a_{3}q_{1}q_{3}+2\dfrac
{q_{0}q_{2}\Delta}{a_{1}}\\
2a_{1}q_{1}q_{2}+2\dfrac{q_{0}q_{3}\Delta}{a_{2}} & q_{0}^{2}-a_{1}q_{1}%
^{2}{\small +}a_{2}q_{2}^{2}-a_{3}q_{3}^{2} & 2a_{3}q_{2}q_{3}-2\dfrac
{q_{0}q_{1}\Delta}{a_{2}}\\
2a_{1}q_{1}q_{3}-2\dfrac{q_{0}q_{2}\Delta}{a_{3}} & 2a_{2}q_{2}q_{3}%
+2\dfrac{q_{0}q_{1}\Delta}{a_{3}} & q_{0}^{2}-a_{1}q_{1}^{2}-a_{2}q_{2}%
^{2}{\small +}a_{3}q_{3}^{2}%
\end{array}
\right]  $
\end{center}

corresponding to $q=q_{0}+q_{1}\mathbf{i}+q_{2}\mathbf{j}+q_{3}\mathbf{k}.$

\textbf{Remark}

In the $n>3$ dimensional spaces, rotations can be classified such as simple,
composite, and isoclinic, depending on plane of rotation. Simple rotation is a
rotation with only one plane of rotation. In a simple rotation, there is a
fixed plane. The rotation is said to take place about this plane. So points do
not change their distance from this plane as they rotate. Orthogonal to this
fixed plane is the plane of rotation. The rotation is said to take place in
this plane. On the other hand, a rotation with two or more planes of rotation
is called a composite rotation. The rotation can take place in each plane of
rotation. These planes are orthogonal. In $%
\mathbb{R}
^{4}$ it is called a double rotation. A double rotation has two angles of
rotation, one for each plane of rotation. The rotation is specified by giving
the two planes and two non-zero angles $\beta$ and $\theta$ (if either angle
is zero, then the rotation is simple). Finally, the isoclinic rotation is a
special case of the composite rotation, when the two angles are equal
\cite{rot types}. In the $4$ dimensional Euclidean and Lorentzian spaces, a
skew symmetric matrix is decomposed as $A=\theta_{1}A_{1}+\theta_{2}A_{2}$
using two skew-symmetric matrices $A_{1}$ and $A_{2}$ satisfying the
properties $A_{1}A_{2}=0,$ $A_{1}^{3}=-A_{1}$ and $A_{2}^{3}=-A_{2}.$ Hence,
the Rodrigues and Cayley rotation formulas can be used to generate 4
dimensional rotation matrices (\cite{eberly}, \cite{J. Gallier1},
\cite{melek2}, \cite{caygallier}, and \cite{rot3}).

\end{document}